\newtheorem{thm}{Theorem}[section]
\newtheorem{prop}[thm]{Proposition}
\newtheorem{lem}[thm]{Lemma}
\newtheorem{cor}[thm]{Corollary}
\theoremstyle{definition}
\newtheorem{exm}[thm]{Example}
\newtheorem{defn}[thm]{Definition}
\theoremstyle{remark}
\newtheorem{remk}[thm]{Remark}
\newtheorem{remks}[thm]{Remarks}
\newtheorem{exms}[thm]{Examples}
\newtheorem{notat}[thm]{Notation}
\numberwithin{equation}{section}
\newcommand{\sM}{{\mathcal M}}
\newcommand{\sO}{{\mathcal O}}
\newcommand{\Z}{{\mathbb Z}}
\newcommand{\surj}{\twoheadrightarrow}
\newcommand{\inj}{\hookrightarrow}
\newcommand{\ds}{{/\kern-3pt/}}
\newcommand{\ov}{\overline}
\begin{document}
\title{Gersten conjecture for Equivariant $K$-theory and Applications}
\author{Amalendu Krishna}

\keywords{Equivariant $K$-theory, Group schemes}

\subjclass{Primary 19D10, 14L15; Secondary 14L30}
\baselineskip=10pt 
                                              
\begin{abstract}
For a reductive group scheme $G$ over a regular semi-local ring $A$, we prove
the Gersten conjecture for the equivariant $K$-theory.
As a consequence, we show that if $F$ is the field of fractions of $A$, then 
$K^G_0(A) \cong K^G_0(F)$, generalizing the analogous result for a dvr by
Serre \cite{Serre}. We also show the rigidity for the $K$-theory with 
finite coefficients of a henselian local ring in the equivariant setting. 
We use this rigidity theorem to compute the equivariant $K$-theory of
algebraically closed fields. 
\end{abstract}

\maketitle   

\section{Introduction}
The classical Gersten conjecture in the algebraic $K$-theory has had 
tremendous amount of applications in the study of algebraic $K$-theory and 
algebraic cycles on smooth schemes. This conjecture was settled by Quillen 
\cite{Quillen} for regular semi-local rings which are essentially of finite 
type over a field. Now let $A$ be a regular semi-local ring and let $F$ 
denote the field of fractions of $A$. Let $G$ be a connected reductive group 
scheme over $A$. For any ring extension $A \to B$, let $R_B(G)$ denote the 
Grothendieck group of the linear representations of $G$ over the base ring $B$.
The extension of scalars gives a natural map 
\begin{equation}\label{eqn:ext}
j : R_A(G) \to R_F(G)
\end{equation}
and one can now ask if this is an isomorphism. This question was asked
by Grothendieck when $G$ is the general linear group over a discrete
valuation ring or more generally, a dedekind domain, and was affirmatively
answered by Serre ({\sl cf.} \cite[Th{\'e}reme~5]{Serre}).
One of the motivations behind the lookout for such an isomorphism
is that the representation ring of an algebraic group over a field is 
relatively easier to compute and one can use the above to compute such
rings over more general rings and this will have applications in the study
of the equivariant $K$-theory of schemes with group actions.
In this note, we show that the isomorphism of ~\ref{eqn:ext} is a direct
consequence of the more general equivariant Gersten conjecture which we now
state. 

Assume that $A$ is a regular semi-local ring which is essentially of finite 
type over a field $k$. Let $F$ be the field of fractions of $A$. Let $G$ be
a connected and reductive affine group scheme over $A$. 
Recall that such a group scheme is said to be {\sl split} if there is a 
maximal torus of $G$ which is defined and split over $A$. For any ring
extension $A \to B$, let $K^G(B)$ (resp. $G^G(B)$) denote the spectrum of
the $K$-theory of finitely generated projective $B$-modules
(resp., finitely generated $B$-modules) with an action of the group scheme
$G_B$ over $B$. For $i \in \Z$, let $K^G_i(B)$ (resp. $G^G_i(B)$) denote
the homotopy groups of the spectrum $K^G(B)$ (resp. $G^G(B)$).
For a prime ideal $\mathfrak p$ of $A$, let $k(\mathfrak p)$ denote
the residue field of $\mathfrak p$.
\begin{thm}\label{thm:Gersten}
For a split and connected reductive group scheme $G$ over the regular
semi-local ring $A$ as above, and for any $i \in \Z$, there is an exact 
sequence
\begin{equation}\label{eqn:Gersten0}
0 \to K^G_i(A) \xrightarrow{i}  K^G_i(F) \xrightarrow{d_1}
\stackrel{}{\underset {{\rm height} \ {\mathfrak p} = 1}{\coprod}}
{K^G_{i-1}\left(k(\mathfrak p)\right)} \to \cdots . 
\end{equation}
\end{thm}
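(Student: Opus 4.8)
The plan is to carry Quillen's proof of the Gersten conjecture over to the equivariant setting; the one genuinely new ingredient is the observation that a \emph{split} reductive group scheme is constant, which is where the splitness hypothesis gets used. First I would reduce, as usual, to the case $A = \sO_{X,S}$ with $X = \Spec R$ smooth affine over $k$ and $S \subset X$ a finite set of points, and (since $A$ is regular) assume $A$ is a domain with fraction field $F$. As $G$ is of finite presentation it extends, after shrinking $X$, to a split reductive group scheme over $X$, and then the classification of split reductive group schemes (SGA3) gives an isomorphism $G \cong G_0 \times_k X$ for a fixed split reductive $k$-group $G_0$. Consequently $B \mapsto K^G(B)$ on $A$-algebras is the equivariant $K$-theory of the \emph{constant} group $G_0$, acting trivially on all schemes that occur, so Thomason's equivariant $K$-theory is available without extra hypotheses: localization, devissage inside the equivariant abelian category, étale excision with supports, $\A^1$-homotopy invariance, and the comparison $K^{G_0} \xrightarrow{\sim} G^{G_0}$ on regular schemes (equivariant resolution). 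Since $A$ and all residue fields $k(\mathfrak p)$ are regular, I would henceforth work with the $G$-theory spectra $G^{G_0}(-)$ of equivariant coherent sheaves, which agree there with $K^{G_0}(-)$.

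Because $G_0$ acts trivially on $X$, every closed subscheme is $G_0$-stable, so Quillen's localization theorem applied inside $\mathrm{Coh}^{G_0}(X)$ produces the fibre sequences $G^{G_0}(X\text{ on }Z) \to G^{G_0}(X) \to G^{G_0}(X\setminus Z)$. Filtering $G^{G_0}(X)$ by codimension of support and passing to the localization at $S$ yields a spectral sequence
$$E_1^{p,q} \ =\ \coprod_{\mathrm{ht}\,\mathfrak p = p} G^{G_0}_{-p-q}\bigl(k(\mathfrak p)\bigr)\ \Longrightarrow\ G^{G_0}_{-p-q}(A),$$
where the $E_1$-page is identified by equivariant devissage --- equivariant coherent sheaves on $\Spec \sO_{X,\mathfrak p}$ supported at the closed point are resolved by $G_0$-equivariant $k(\mathfrak p)$-modules --- and the edge map is $G^{G_0}_i(A) \to E_1^{0,-i} = G^{G_0}_i(F)$. \thmref{thm:Gersten} is exactly the assertion that the augmented complex $0 \to G^{G_0}_i(A) \to E_1^{0,-i} \xrightarrow{d_1} E_1^{1,-i} \to \cdots$ is exact.

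To prove this exactness I would use Quillen's formal criterion: it suffices that for every $p \geq 1$ and every equivariant coherent sheaf $\sM$ on an affine open $U \ni S$ with $\codim_S \Supp(\sM) \geq p$, the class of $\sM$ in $G^{G_0}_*(\sO_{X,S}\text{ on }\Supp \sM)$ maps to $0$ in $G^{G_0}_*(\sO_{X,S})$ after shrinking $U$ about $S$. Here I would apply Quillen's geometric presentation lemma to the underlying data $(X,S,Z:=\Supp \sM)$: after shrinking there is a smooth morphism $\pi : U \to Y := \A^{d-1}_k$ of relative dimension $1$, finite on $Z$, and an étale $U \to \A^1_Y$ restricting to a closed immersion on $Z$. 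Giving every scheme the trivial $G_0$-action, equivariant étale excision puts $[\sM]$ in $G^{G_0}_*(\A^1_Y\text{ on }Z)$; enlarging the support to $\A^1_W$ with $W := \pi(Z) \subseteq Y$ of codimension $\geq p-1$, and using equivariant $\A^1$-homotopy invariance for $G$-theory with supports along $\A^1_Y \to Y$, the class descends to $G^{G_0}_*(Y\text{ on }W)$, i.e. to codimension $\geq p-1$ on $Y$. Tracking this through exactly as in Quillen --- including his trick of passing to two finite extensions of coprime degrees when $k$ is finite, the equivariant transfer being available since $G_0$ is constant --- gives the required vanishing; rewriting $G^{G_0}$ as $K^{G_0}$ over $A$ and over the $k(\mathfrak p)$ then yields the stated sequence.

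The difficulty is not a single hard computation but the need to confirm that Thomason's equivariant $K$-theory formalism supports every step of Quillen's argument; the reason this is feasible --- and the heart of the matter --- is the reduction of ``split reductive $G$ over $A$'' to ``constant $G_0$ acting trivially'', after which $G_0$ is a mere passenger in all of Quillen's geometry. The points that most warrant care are the equivariant resolution theorem behind $K^{G_0}(A) \cong G^{G_0}(A)$, the identification of the $E_1$-page by equivariant devissage, and equivariant étale excision with supports. (For $G_0$ a split torus the theorem is already transparent, since then $G^{G_0}(B) = G(B) \otimes_\Z R(G_0)$ and the equivariant complex is the ordinary Gersten complex tensored with the free abelian group $R(G_0)$; the content is to reach the general split reductive case without inverting the order of the Weyl group or passing to a maximal torus.)
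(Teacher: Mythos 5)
Your strategy is viable, but it is genuinely different from the route the paper takes, so a comparison is worthwhile. The paper never goes near Quillen's geometric presentation lemma. It first settles the split torus $T=D_S(M)$ by observing that $T$-equivariant coherent modules are exactly $M$-graded modules, so $G^T_*(-)\cong G_*(-)\otimes_{\Z}\Z[M]$ compatibly with the coniveau filtration, and the equivariant Gersten complex is the non-equivariant one (Quillen's theorem, used as a black box) tensored with the flat $\Z$-module $\Z[M]$. For general split $G$ it chooses $T\subset B\subset G$ and proves (Lemma~3.1) that $G^G_*(A')\to G^T_*(A')$ is split injective for every quotient $A'$ of $A$: the composite $G^G_*(A')\to G^B_*(A')\cong G^G_*(G/B)\xrightarrow{f_*}G^G_*(A')$ is the identity by the projection formula together with Kempf's theorem $f_*\sO_{G/B}=\sO_S$, and $G^B_*\cong G^T_*$ by homotopy invariance; since this retraction respects the filtration by codimension of support, the effaceability criterion (Corollary~2.7) for $T$ forces it for $G$. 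Your route instead rests on the (correct, by the unicity theorem of SGA3) observation that a split reductive group scheme over $A$ is constant, $G\cong G_0\times_k{\rm Spec}(A)$, and re-runs Quillen's geometric argument inside the category of $k[G_0]$-comodules in coherent sheaves; because the action is trivial everywhere, every functor Quillen uses (flat pullback, finite pushforward, twisting by a divisor, the transfer for finite extensions of a finite ground field) is $k[G_0]$-colinear, so the argument does carry over. What this buys is independence from the flag variety --- no Kempf vanishing, no projection formula, no reduction to a maximal torus --- and in principle applicability to any constant flat affine group scheme with the resolution property. What it costs is the re-verification of the whole of Quillen's machinery (equivariant devissage, localization, excision with supports, homotopy invariance, transfers) where the paper invokes Quillen's theorem exactly once; note also that the paper's retraction lemma is reused for the rigidity theorem in Section~4, so it is not wasted effort there. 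One small correction: the paper's argument does not invert the order of the Weyl group either, so that is not a point of difference between the two approaches.
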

\begin{cor}\label{cor:Gersten1}
The natural map $j : R_A(G) \to R_F(G)$ of representation rings is an
isomorphism. In particular, $R_A(G)$ is noetherian.
\end{cor}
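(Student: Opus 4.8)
The plan is to deduce the corollary from Theorem~\ref{thm:Gersten} by specializing to $i = 0$ and identifying the groups $K^G_0$ of a field with representation rings. First I would recall the fundamental comparison between equivariant $K$-theory and representation theory: for any field $L$ over which $G$ is defined (here $L = F$ or $L = A$ itself, when $A$ is a field), the category of finitely generated projective $L$-modules with a $G_L$-action is precisely the category of finite-dimensional $L$-linear representations of $G_L$, so $K^G_0(L) = R_L(G)$, and more generally $K^G_0(B) = R_B(G)$ for the base ring $B$ by the definition of $K^G(B)$ given in the excerpt. Under this identification, the map labeled $i : K^G_0(A) \to K^G_0(F)$ in \eqref{eqn:Gersten0} is exactly the extension-of-scalars map $j : R_A(G) \to R_F(G)$ of \eqref{eqn:ext}, since both are induced by $- \otimes_A F$ on $G$-equivariant modules.

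Next I would extract the two facts I need from the exact sequence \eqref{eqn:Gersten0} with $i = 0$: namely that $j$ is \emph{injective} (this is the leftmost arrow of the sequence being injective) and that its cokernel injects into $\coprod_{\operatorname{height} \mathfrak{p} = 1} K^G_{-1}(k(\mathfrak{p}))$. For surjectivity of $j$ it therefore suffices to show that each negative equivariant $K$-group $K^G_{-1}(k(\mathfrak{p}))$ vanishes. Here I would invoke the fact that for a split reductive group scheme over a field (or more generally over a regular base), equivariant $K$-theory of a field has no negative homotopy groups — this follows because $K^G$ of a Noetherian ring of finite Krull dimension is connective in the relevant range, or more directly because $R_L(G)$-theory is built from an exact category and the negative $K$-groups of the equivariant category of a field vanish by the same argument (dévissage / the spectrum is that of an abelian category with enough projectives, hence connective). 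With $K^G_{-1}(k(\mathfrak{p})) = 0$ for all height-one primes $\mathfrak{p}$, the cokernel of $j$ embeds in the zero group, so $j$ is surjective, and combined with injectivity we get that $j$ is an isomorphism.

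Finally, the "in particular" clause follows formally: $R_F(G)$ is Noetherian because $R_L(G)$ is a finitely generated $\Z$-algebra (hence Noetherian) for $G$ a split reductive group over a field $L$ — it is a quotient of the representation ring of a maximal torus, which is a Laurent polynomial ring $\Z[X^*(T)]$ over the character lattice, via the restriction map, and the invariant ring under the Weyl group is finitely generated. Transporting this along the isomorphism $j$ gives that $R_A(G)$ is Noetherian. The main obstacle I anticipate is the vanishing $K^G_{-1}(k(\mathfrak{p})) = 0$: one must be careful that "equivariant $K$-theory" here is the one defined from the exact category of equivariant modules, so that Quillen-type connectivity applies, rather than some Bass-style nonconnective variant; granting the definition in the excerpt, this is exactly the connectivity of the $K$-theory spectrum of an exact category, and the argument goes through cleanly.
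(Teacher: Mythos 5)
Your proposal is correct and follows essentially the same route as the paper: take $i=0$ in the Gersten sequence, observe $K^G_{-1}(k(\mathfrak p))=0$ by connectivity of the $K$-theory spectrum of the abelian category $\sM^G(k(\mathfrak p))$, and reduce noetherianity to $R_F(G)=R_F(T)^W$ (the paper cites Serre and a lemma of [KV] for this last step). One small wording slip: $R_F(G)$ is a \emph{subring} of $R_F(T)$ (the $W$-invariants under the injective restriction map), not a quotient, but your subsequent appeal to finite generation of the invariant ring is the right argument.
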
 
\begin{cor}\label{cor:Gersten2}
Let $G$ be a split and connected reductive group scheme over the regular
semi-local ring $A$ as above, and let $H$ be a subgroup scheme of $G$ of the
same type. Then the natural map of rings $R_A(G) \to R_A(H)$ is finite.
\end{cor}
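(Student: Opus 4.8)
The plan is to reduce, via \corref{cor:Gersten1}, first to the analogous statement over the fraction field $F$ and then to an algebraically closed field, where one can invoke the classical description of the representation ring of a split reductive group in terms of its character lattice.

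Applying \corref{cor:Gersten1} to both $G$ and $H$ --- the latter being again split, connected and reductive over $A$ --- the restriction homomorphism $R_A(G) \to R_A(H)$ sits in a commutative square with the restriction homomorphism $R_F(G) \to R_F(H)$, the vertical maps being the isomorphisms $j$. Hence it suffices to prove that $R_F(G) \to R_F(H)$ is finite. Since $G$ and $H$ are split, base change to the algebraic closure $\ov{F}$ yields isomorphisms $R_F(G) \xrightarrow{\sim} R(G_{\ov{F}})$ and $R_F(H) \xrightarrow{\sim} R(H_{\ov{F}})$ compatible with restriction (both sides being the Weyl-invariants of the same split-torus group algebra), so we may assume the base field is algebraically closed and replace $G, H$ by $G_{\ov{F}}, H_{\ov{F}}$.

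Choose a maximal torus $S$ of $H$; since maximal tori are conjugate, $S$ lies in a maximal torus $T$ of $G$. Restriction of characters along the closed immersion $S \hookrightarrow T$ is a surjection of character lattices $X^*(T) \surj X^*(S)$, hence a surjection of group algebras $\Z[X^*(T)] \surj \Z[X^*(S)]$; in particular $\Z[X^*(S)]$ is a finite $\Z[X^*(T)]$-module. Moreover $\Z[X^*(T)]$ is a finite module over its subring $\Z[X^*(T)]^{W_G}$ of invariants under the finite Weyl group $W_G$ of $(G,T)$, since it is integral over, and finitely generated as an algebra over, this subring. Hence $\Z[X^*(S)]$ is a finite $\Z[X^*(T)]^{W_G}$-module. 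Now use the classical theorem that restriction to a split maximal torus identifies $R(G) \cong \Z[X^*(T)]^{W_G}$ and $R(H) \cong \Z[X^*(S)]^{W_H}$; under these identifications the composite $R(G) \to R(H) \to R(S) = \Z[X^*(S)]$ is precisely the inclusion $\Z[X^*(T)]^{W_G} \hookrightarrow \Z[X^*(T)]$ followed by the surjection $\Z[X^*(T)] \surj \Z[X^*(S)]$. Therefore $R(H)$ is an $R(G)$-submodule of the finite $R(G)$-module $\Z[X^*(S)]$, and since $R(G)$ is noetherian by \corref{cor:Gersten1}, the submodule $R(H)$ is finite over $R(G)$, as claimed.

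The only genuine difficulty is the reduction to the split, algebraically closed case together with the verification that the restriction map on representation rings factors through the maximal tori as above; once these are in place the finiteness is the routine chain ``surjection, then finite-group invariants, then noetherian submodule.'' One should also confirm that the hypothesis ``$H$ of the same type'' is precisely what guarantees that \corref{cor:Gersten1} applies to $H$ and that $H_{\ov{F}}$ is split.
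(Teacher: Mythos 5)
Your proof is correct and takes the same route as the paper: both first use \corref{cor:Gersten1} to reduce the statement over $A$ to the corresponding statement over the fraction field $F$. The paper then simply cites \cite[Proposition~2.1]{Krishna1} for the field case, whereas you supply the standard argument behind such a statement (pass to $\ov F$, surjectivity of $X^*(T)\surj X^*(S)$, finiteness of $\Z[X^*(T)]$ over its Weyl invariants, and the noetherian-submodule step), which is a correct, self-contained substitute for that citation.
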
 

We next turn our attention to the equivariant $K$-theory of henselian
local rings with finite coefficients. Let $A$ be a henselian regular local 
ring over a field $k$, and let $L$ denote the residue field of $A$.
It was shown by Gillet and Thomason ({\sl cf.} \cite[Theorem~A]{GT},
see also \cite{Suslin0} and \cite{Gabber}) that the algebraic $K$-theory
of $A$ with finite coefficients agrees with that of $L$. This was later
used by Suslin ({\sl cf.} \cite{Suslin0}) in a crucial way to compute the 
algebraic $K$-theory of algebraically closed fields, which allowed him to 
settle a conjecture of Quillen. We prove here a similar rigidity theorem in 
the equivariant setting.
\begin{thm}\label{thm:rigid}
Let $A$ be the strict henselization of a ring which is either a 
discrete valuation ring or the local ring at a smooth point
of a variety over a field $k$, and let $L$ denote the residue field of
$A$. Let $G$ be a connected and reductive group scheme over $A$. Then for
any positive integer $n$ prime to the characteristic of $L$, the natural
map
\begin{equation}\label{eqn:rigid0}
K^G_*\left(A, {\Z}/n \right) \to K^G_*\left(L, {\Z}/n \right)
\end{equation}
is an isomorphism.
\end{thm}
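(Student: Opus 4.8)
The strategy is to bootstrap from the classical non-equivariant rigidity theorem of Gillet--Thomason \cite{GT} and Gabber \cite{Gabber} recalled above, using the split structure of $G$ to peel off the representation-theoretic part. Since $A$ is strictly henselian local, the connected reductive group scheme $G$ is split over $A$; fix a split maximal torus $T$ contained in a Borel subgroup $B = T \ltimes U$ of $G$ over $A$, and observe that $G_L \supset B_L \supset T_L$ is then the split reductive $L$-group of the same type. Write $\Lambda = X^*(T)$; base change along $A \to L$ identifies $R_A(T)$ with $R_L(T) = \Z[\Lambda]$ and $R_A(G)$ with $R_L(G)$, and we denote these common rings by $R'$ and $R$. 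The $R$-module $R'$ is finite and free, hence faithfully flat over $R$.

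First I would reduce the assertion from $G$ to $T$. For a local $k$-algebra $S$ with trivial $G$-action, the change-of-groups equivalence $\mathbf{Vect}^G(G/B \times \Spec S) \simeq \mathbf{Vect}^B(\Spec S)$, the identification of $K^B(\Spec S)$ with $K^T(\Spec S)$ coming from $\A^1$-homotopy invariance along the unipotent radical of $B$, and the flag-bundle formula in equivariant $K$-theory (together with $K^G_0(G/B) \cong R'$) should combine to give a natural isomorphism
\begin{equation*}
K^T(\Spec S)\ \simeq\ K^G(\Spec S)\otimes_{R} R'
\end{equation*}
of $K^G(\Spec S)$-modules, compatible with pullback in $S$. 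As $R'$ is flat over $R$, the functor $-\otimes_R R'$ commutes with the formation of homotopy groups and with the universal-coefficient sequence for $\Z/n$-coefficients; hence, taking $S = A$ and $S = L$, the map~\eqref{eqn:rigid0} becomes, after $-\otimes_R R'$, the analogous map $K^T_*(A,\Z/n) \to K^T_*(L,\Z/n)$. By faithfully flat descent along $R \to R'$ it suffices to prove that the latter is an isomorphism.

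For the torus $T$ I would split off $R'$ outright. Since $A$ and $L$ are local, the graded form of Nakayama's lemma shows that every finitely generated $\Lambda$-graded projective module over $A$, or over $L$, is a finite direct sum of rank-one modules concentrated in single degrees; thus $\mathbf{Vect}^T(\Spec A) \simeq \bigoplus_{\lambda\in\Lambda}\mathbf{Vect}(\Spec A)$, and likewise over $L$, compatibly with base change along $A \to L$. Consequently $K^T(\Spec A) \simeq \bigvee_{\lambda\in\Lambda}K(A)$ and $K^T(\Spec L) \simeq \bigvee_{\lambda\in\Lambda}K(L)$, and on $\Z/n$-coefficient homotopy groups the map $K^T_*(A,\Z/n) \to K^T_*(L,\Z/n)$ is identified with $K_*(A,\Z/n)\otimes_\Z R' \to K_*(L,\Z/n)\otimes_\Z R'$. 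As $R' = \Z[\Lambda]$ is free over $\Z$, this is an isomorphism precisely when $K_*(A,\Z/n) \to K_*(L,\Z/n)$ is one; and the latter is exactly the classical rigidity theorem, which applies since $A$ is a strict henselization of the stated form and $n$ is prime to $\Char L$.

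The step I expect to be the main obstacle is the reduction from $G$ to $T$: setting up the isomorphism $K^T(\Spec S) \simeq K^G(\Spec S)\otimes_R R'$, and above all verifying that it is natural enough for the square comparing the $G$- and $T$-equivariant forms of~\eqref{eqn:rigid0} to commute. One must check that the change-of-groups equivalence, the homotopy-invariance passage from $B$ to $T$, and the flag-bundle formula are each compatible with base change along $A \to L$ and with the $\Z/n$-coefficient structure. Once that compatibility is in place, the remaining ingredients --- splitness of $G$ over the strictly henselian $A$, the graded-module decomposition, faithful flatness of $R'$ over $R$, and the classical rigidity input --- are routine or already in the literature.
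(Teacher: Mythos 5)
Your torus case coincides with the paper's: decompose $T$-equivariant modules by characters to get $K^T_*(A,\Z/n)\cong K_*(A,\Z/n)\otimes_{\Z}\Z[M]$, compare with $L$ through the universal coefficient sequence, and quote classical rigidity. The reduction from $G$ to $T$ is where you diverge, and where the gap lies. You rest it on an isomorphism $K^T(\Spec S)\simeq K^G(\Spec S)\otimes_R R'$ with $R=R_A(G)$, $R'=R_A(T)$, asserting that $R'$ is finite and \emph{free} over $R$ and then descending faithfully flatly along $R\to R'$. Neither ingredient is available at the stated level of generality: freeness of $R(T)$ over $R(G)$ is the Pittie--Steinberg theorem, proved under the hypothesis that the derived group of $G$ is simply connected, and is not something you may assert for an arbitrary connected reductive group scheme; and even granting it, the Leray--Hirsch/K\"unneth statement $K^G(S\times G/B)\cong K^G(S)\otimes_{R(G)}R(T)$ for \emph{higher} $K$-theory over a non-field base is a substantial theorem that you neither prove nor cite. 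You flag this step as ``the main obstacle,'' but it is not a compatibility check to be verified later --- it is the missing mathematical content, and without it the faithfully flat descent has nothing to descend along.

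The paper's proof needs none of this. After replacing $K$-theory by $G$-theory (Proposition~\ref{prop:resolution}, so that proper pushforward is available), it uses the chain
\[
G^G_*(A)\to G^B_*(A)\xrightarrow{\cong} G^G_*(G/B)\xrightarrow{f_*} G^G_*(A),
\]
and shows the composite is the identity by the projection formula together with Kempf's vanishing $f_*\left(\sO_{G/B}\right)=\sO_S$; combined with $G^B_*\cong G^T_*$ (homotopy invariance along the unipotent radical), this exhibits $G^G_*(-,\Z/n)$ as a retract of $G^T_*(-,\Z/n)$. The base-change lemma for the Tor-independent square formed by $X_L\to X=G/B$ and $\Spec L\to S$ (Lemma~\ref{lem:commute}) makes this retraction commute with restriction to the closed point, so the map \eqref{eqn:rigid0} for $G$ is a retract of the corresponding map for $T$, and a retract of an isomorphism is an isomorphism. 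No flatness of $R(T)$ over $R(G)$ and no flag-bundle formula are required. If you replace your tensor-decomposition step by this retraction argument, the rest of your write-up goes through.
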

In the special case when $G$ is an algebraic group over a field $k$ and $A$ 
is the henselization of a $k$-rational point of a smooth variety over $k$ 
with a trivial $G$-action, this result was also shown by Ostvaer and Yagunov 
({\sl cf.} \cite{OY}) by a different method. We also remark that we can
replace the strict henselization in the above theorem by the henselization,
if one assumes that $G$ is split.  

We finally prove the following equivariant analogue of the main results
of Suslin in \cite{Suslin1} and \cite{Suslin0}. As a consequence,
we obtain an explicit computation of the equivariant $K$-theory for reductive
groups over all algebraically closed fields.
\begin{thm}\label{thm:integer}
Let $G$ be a split connected and reductive group scheme over $\Z$. Let $L$ be 
an algebraically closed field of characteristic $p > 0$ and let $A = 
W(L)$ denote the ring of Witt vectors over $L$. Let $E$ denote the algebraic
closure of the quotient field $F$ of $A$. Then for any positive integer $n$
prime to $p$, there is a canonical
isomorphism $K^G_*\left(L, {\Z}/n \right) \cong K^G_*\left(E, {\Z}/n \right)$. 
\end{thm}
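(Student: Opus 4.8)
The plan is to follow Suslin's proof of the non-equivariant statement (\cite{Suslin1}, \cite{Suslin0}), using the equivariant rigidity theorem \thmref{thm:rigid} in place of the classical Gabber--Suslin rigidity. Since $L$ is algebraically closed, $A = W(L)$ is a complete, hence strictly henselian, discrete valuation ring with residue field $L$ of characteristic $p$, fraction field $F$, and uniformizer $\pi$. For each finite subextension $F \subseteq F' \subseteq E$, let $B_{F'}$ denote the integral closure of $A$ in $F'$; as $A$ is henselian local, $B_{F'}$ is a local domain finite over $A$, hence a discrete valuation ring, again strictly henselian, with fraction field $F'$ and residue field $L$ (every residue extension is trivial since $L$ is algebraically closed). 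Each $B_{F'}$ is thus a strictly henselian discrete valuation ring whose residue characteristic $p$ is prime to $n$, and $G$ is connected reductive (indeed split) over it, so \thmref{thm:rigid} applies and shows that pullback along the closed point $i\colon \Spec L \hookrightarrow \Spec B_{F'}$ gives an isomorphism $i^{*}\colon K^{G}_{*}(B_{F'}, \Z/n) \xrightarrow{\sim} K^{G}_{*}(L, \Z/n)$; the case $F' = F$ gives the same statement for $A$.

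Next I would run the localization sequence in equivariant $K$-theory for the open immersion $\Spec F' \hookrightarrow \Spec B_{F'}$ with $G$-invariant closed complement $\Spec L$ (all three schemes are regular, so equivariant $K$- and $G$-theory agree). With $\Z/n$-coefficients it reads
\[
\cdots \to K^{G}_{i}(L, \Z/n) \xrightarrow{i_{*}} K^{G}_{i}(B_{F'}, \Z/n) \xrightarrow{j^{*}} K^{G}_{i}(F', \Z/n) \xrightarrow{\partial} K^{G}_{i-1}(L, \Z/n) \to \cdots .
\]
The decisive point, and the one that genuinely uses rigidity, is that the transfer $i_{*}$ vanishes mod $n$: by the self-intersection formula --- equivalently, by a direct computation with the Koszul resolution $0 \to B_{F'} \xrightarrow{\pi} B_{F'} \to L \to 0$ --- the composite $i^{*}i_{*}$ is multiplication by $\lambda_{-1}(\sN^{\vee}) = 1 - [\sN^{\vee}]$, where $\sN^{\vee} = (\pi)/(\pi^{2})$ is the conormal bundle of $i$; since $\pi$ belongs to the base ring on which $G$ acts trivially, $\sN^{\vee}$ is the trivial equivariant line bundle, whence $i^{*}i_{*} = 0$, and since $i^{*}$ is injective by the previous paragraph, $i_{*} = 0$. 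Therefore the localization sequence splits into short exact sequences, which via the rigidity isomorphism become
\[
0 \to K^{G}_{i}(L, \Z/n) \to K^{G}_{i}(F', \Z/n) \xrightarrow{\partial} K^{G}_{i-1}(L, \Z/n) \to 0 .
\]

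Finally I would take the filtered colimit over all finite $F' \subseteq E$, using continuity of $K$-theory to identify $\colim_{F'} K^{G}_{i}(F', \Z/n) = K^{G}_{i}(E, \Z/n)$. Under the rigidity identifications the transition maps on the left-hand terms are the identity (the reductions $B_{F'} \to L$ are compatible), while those on the residue terms are multiplication by the ramification index $e_{F''/F'}$, the usual consequence of $\pi_{F'} = (\text{unit})\cdot\pi_{F''}^{e}$, valid equivariantly since $G$ acts trivially on the base. As $n$ is prime to the residue characteristic $p = \Char(L)$, for any $F'$ the extension $F'' = F'(\pi_{F'}^{1/n^{2}})$ is totally tamely ramified of degree $n^{2}$, so the induced transition map on residue terms is multiplication by $n^{2}$, which kills $K^{G}_{i-1}(L, \Z/n)$ (annihilated by $n^{2}$ by universal coefficients); hence $\colim_{F'} K^{G}_{i-1}(L, \Z/n) = 0$. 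Taking the exact filtered colimit of the short exact sequences yields a natural isomorphism $K^{G}_{i}(E, \Z/n) \cong K^{G}_{i}(L, \Z/n)$, which composed with the rigidity isomorphism $K^{G}_{*}(A, \Z/n) \cong K^{G}_{*}(L, \Z/n)$ of the first paragraph gives the claimed canonical isomorphism. I expect the main obstacle to be the vanishing $i_{*} = 0$: it rests on applying \thmref{thm:rigid} to the mixed-characteristic valuation rings $B_{F'}$ --- legitimate because they are strictly henselian discrete valuation rings, which is all that statement requires --- together with the equivariant excess-intersection computation; the surrounding ramification bookkeeping is routine.
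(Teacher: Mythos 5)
Your proof is correct, and its overall architecture --- rigidity for the valuation rings of the finite subextensions, localization giving short exact sequences, then the colimit over $F'\subseteq E$ in which the residue terms die because the ramification indices become divisible by $n^2$ while $K^G_{i-1}(L,\Z/n)$ has exponent dividing $n^2$ --- is exactly the paper's (which delegates the colimit step to Suslin's Proposition~3.12 ``verbatim''). The one place where you genuinely diverge is the proof of the key short exact sequence, i.e.\ the injectivity of $K^G_i(B_{F'},\Z/n)\to K^G_i(F',\Z/n)$, which is the paper's Lemma~\ref{lem:integer*}. The paper obtains this by reducing to the split maximal torus via the split injection $G^G_*\hookrightarrow G^T_*$ of Lemma~\ref{lem:splitting}, identifying the torus case with $G_*(-)\otimes\Z[M]$, and then importing Suslin's non-equivariant split injectivity \cite[Corollary~3.11]{Suslin0}. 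You instead argue directly: the Koszul resolution of $L$ over $B_{F'}$ shows $i^*i_*=\lambda_{-1}(\sN^{\vee})=0$ because the conormal module is the trivial equivariant line bundle ($G$ acts trivially on the base), and rigidity (Theorem~\ref{thm:rigid}) makes $i^*$ injective mod $n$, forcing $i_*=0$. This is more self-contained --- it re-proves the equivariant analogue of Suslin's Corollary~3.11 rather than quoting it, avoids any appeal to the torus reduction at this step, and sidesteps the paper's remark that rigidity for non-strict henselian DVRs requires $G$ split (your $B_{F'}$ are already strictly henselian since $L$ is algebraically closed). The mild cost is that you must know the excess-intersection computation passes to higher equivariant $K$-theory, i.e.\ that the exact functors $\Tor_0^{B_{F'}}(-,L)$ and $\Tor_1^{B_{F'}}(-,L)$ on $\sM^G(L)$ are both the identity and that additivity gives $i^*i_*=\id-\id=0$ on spectra; this is standard but is a verification the paper's route does not need. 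Both arguments are sound.
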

\begin{cor}\label{cor:integer0}
For a connected reductive group $G$ over an algebraically closed field $k$, 
and any $n$ prime to the characteristic of $k$, 
$K^G_i\left(k, {\Z}/n \right)$ is zero (if $i$ is odd) or isomorphic to 
${R_k(G)}/{n}$ (if $i$ is even). 
\end{cor}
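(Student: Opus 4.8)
The plan is to deduce \corref{cor:integer0} from \thmref{thm:integer} by reducing everything to the case of characteristic zero, and then to handle that case directly using the semisimplicity of the representation category together with Suslin's computation of the $K$-theory of an algebraically closed field.

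\emph{Reduction to characteristic zero.} Over an algebraically closed field every connected reductive group is split and is the base change of a (uniquely determined) split connected reductive Chevalley--Demazure group scheme $G_0$ over $\Z$, so I may write $G = G_0 \times_{\Z} k$. If $\Char(k) = p > 0$, I would apply \thmref{thm:integer} with its ``$L$'' taken to be $k$ itself; then its ``$A$'' is $W(k)$ and its ``$E$'' is the algebraic closure of the fraction field of $W(k)$, a field of characteristic $0$, and the theorem produces a canonical isomorphism $K^G_*(k, \Z/n) \cong K^G_*(E, \Z/n)$. I would also invoke the standard fact that for a split reductive group the representation ring over any field is the ring attached to its root datum --- with the classes of the Weyl modules forming a $\Z$-basis, independently of the field --- so that $R_k(G)$ and $R_E(G)$ are identified compatibly with the map induced on $K^G_0$. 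Granting this, it suffices to prove the corollary when $k$ is algebraically closed of characteristic $0$.

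\emph{The characteristic-zero case.} Here $K^G(\Spec k)$ is, by definition, the $K$-theory spectrum of the exact category of finite-dimensional $k$-linear representations of $G_k$. Since $\Char(k) = 0$ and $G$ is connected reductive, $G_k$ is linearly reductive and this category is semisimple; as $k$ is algebraically closed, Schur's lemma identifies it with the direct sum, indexed by the set $I$ of isomorphism classes of irreducible representations, of copies of the category of finite-dimensional $k$-vector spaces. Because $K$-theory commutes with filtered colimits of exact categories and sends finite products of exact categories to products of spectra (which coincide with coproducts for finitely many factors), I would conclude $K^G(k) \simeq \bigvee_{\alpha \in I} K(k)$, and hence $K^G_i(k, \Z/n) \cong \bigoplus_{\alpha \in I} K_i(k, \Z/n)$ for all $i$; taking $i = 0$ gives $R_k(G) \cong \bigoplus_{\alpha \in I} \Z$ and therefore $R_k(G)/n \cong \bigoplus_{\alpha \in I} \Z/n$.

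\emph{Conclusion.} It then remains to feed in Suslin's computation of the mod-$n$ $K$-theory of an algebraically closed field of characteristic zero (\cite{Suslin1}, combined with rigidity for extensions of such fields, \cite{Suslin0}): $K_i(k, \Z/n) \cong \Z/n$ for $i$ even and $K_i(k, \Z/n) = 0$ for $i$ odd, all the groups vanishing for $i < 0$ since $k$ is regular. Inserting this into the previous step yields $K^G_i(k, \Z/n) \cong R_k(G)/n$ for $i$ even and $\ge 0$ and $K^G_i(k, \Z/n) = 0$ for $i$ odd, as asserted. The substantive input --- the positive-characteristic case --- is entirely absorbed into \thmref{thm:integer}; the only mildly delicate points are the semisimple decomposition of the representation category (and checking that the $K$-theory of the possibly infinite direct sum really is the coproduct) and the bookkeeping needed to recognise the resulting groups as $R_k(G)/n$ rather than merely as abstractly free $\Z/n$-modules.
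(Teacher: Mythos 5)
Your proposal is correct, and it agrees with the paper on the first step (lifting $G$ to a Chevalley group scheme over $\Z$ and using Theorem~\ref{thm:integer} to pass from characteristic $p$ to the characteristic-zero field $E$), but it diverges genuinely in how the characteristic-zero case is handled. The paper reduces further to $k=\C$ via the rigidity theorem of Ostvaer--Yagunov, then treats a split torus by the K{\"u}nneth-type isomorphism $K_i(k,\Lambda)\otimes_{\Lambda}R_k(G,\Lambda)\cong K^G_i(k,\Lambda)$ together with Suslin's computation, and finally deduces the general reductive case from the fact that $K^G_*(k,\Z/n)$ is a retract of $K^T_*(k,\Z/n)$ (Lemma~\ref{lem:splitting}). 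You instead exploit linear reductivity in characteristic zero: the category of representations is semisimple, Schur's lemma over the algebraically closed field splits it into blocks indexed by the irreducibles, and compatibility of $K$-theory with finite products and filtered colimits gives $K^G_i(k,\Z/n)\cong\bigoplus_I K_i(k,\Z/n)$ with $R_k(G)/n\cong\bigoplus_I\Z/n$, after which Suslin's theorem finishes the argument. Your route buys a more direct identification of the answer with $R_k(G)/n$ in even degrees --- the paper's retract argument a priori only exhibits $K^G_i(k,\Z/n)$ as a summand of $R_k(T)/n$ and leaves the identification with $R_k(G)/n$ somewhat implicit --- and it dispenses with the reduction to $\C$. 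The trade-offs are that your argument is strictly confined to characteristic zero (semisimplicity fails for reductive groups in characteristic $p$, so the reduction via Theorem~\ref{thm:integer} is indispensable, as you note), and it leans on the identification of $R_k(G)$ with $R_E(G)$ via the root datum to transport the answer back; both points are handled correctly in your write-up. One small caveat, which applies equally to the statement as printed: the conclusion $K^G_i(k,\Z/n)\cong R_k(G)/n$ can only hold for even $i\ge 0$, since the negative $K$-groups of a field vanish; your explicit restriction to $i\ge 0$ is the right reading.
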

\section{Some Preliminaries}   
We give some preparatory results in this section 
in order to prove our equivariant Gersten conjecture.
We fix a regular semi-local ring $A$ which is essentially of finite type
over a field $k$. Let $F$ denote the field of fractions of $A$. Put
$S = {\rm Spec}(A)$. Let $G$ be an affine group scheme over $A$ and let
$G \xrightarrow{\pi} S$ be the structure map.

Recall from \cite{SGA3} that the group scheme $G$ is said to be 
{\sl reductive} if all the geometric fibers of $\pi$ are connected and
reductive algebraic groups. In particular, $G$ is smooth over $S$.
A group scheme $T$ over $S$ is said to {\sl diagonalizable} if there is
finitely generated abelian group $M$ such that the coordinate ring of
$G$ over $S$ is the group algebra $A[M]$. We also write such group schemes 
as $D_S(M)$. Then $T$ is smooth over $S$ if and only if the order of the 
torsion subgroup of $M$ is prime to all the residue characteristics of $A$. 
The group scheme $G$ is called a {\sl Torus} if it is isomorphic to 
$D_S(M)$ in the fpqc topology on $S$, where $M$ is torsion-free.
A closed subgroup scheme $T$ of $G$ is called a {\sl maximal torus} of $G$, 
if $T$ is a torus and every geometric fiber of $T$ is a maximal torus of      
of the corresponding fiber of $G$. 

We shall say that $G$ is {\sl split} (d{\'e}ploy{\'e}) over $S$ if it has a 
maximal torus $T$ which is of the form $D_S(M)$ for a torsion-free abelian 
group $M$ such that $G$ is defined by the root datum $(G, T, M, R)$ over
$S$. Here $R$ is the set of constant functions from $S$ to $M$
({\sl cf.} \cite[Chapter~XXII]{SGA3}). We also recall that a subgroup
scheme $B$ of $G$ is called a {\sl Borel} if every geometric fiber of
$B$ is a Borel subgroup of the corresponding fiber of $G$. Such a Borel
subgroup scheme is called {\sl split} if its unipotent radical is split.
It is known that a reductive group over a field is split in the above 
sense if it contains a split maximal torus. But this is false over a 
general base. However, the following result will show that this is
indeed the case over semi-local rings.
\begin{prop}\label{prop:split0}  
Let $G \xrightarrow{\pi} S$ be a reductive group scheme as above.
Assume that $G$ contains a split maximal torus over $S$. Then $G$ is
split over $S$. In other words, it is given by a root system.
\end{prop}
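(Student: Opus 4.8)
The plan is to reduce the statement to the existence of a Borel subgroup scheme containing the given split maximal torus $T$, and then to invoke the structure theory of reductive group schemes from \cite[Ch.~XXII--XXIII]{SGA3}. Recall that by definition $G$ is split over $S$ precisely when it admits a maximal torus $T = D_S(M)$ together with a Borel $B \supseteq T$ such that the attached root datum has constant roots, i.e.\ the scheme of roots $R \subseteq M_S$ consists of constant sections. So the real content is: given that $T$ is already of the form $D_S(M)$ with $M$ torsion-free, produce such a constant-rooted Borel.

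First I would use that $T$ split means $G$ decomposes under the adjoint action of $T$ into weight spaces: $\mathfrak g = \mathfrak t \oplus \bigoplus_{\alpha} \mathfrak g_\alpha$ where $\alpha$ ranges over a finite subset of $M$ (here one uses that $A$ is connected, so the locally constant function giving each weight is genuinely constant, and that $T$ split forces the eigenspace decomposition to be defined over $A$ rather than just fpqc-locally — this is \cite[Exp.~XIX]{SGA3}). This already exhibits a constant set of roots $R \subseteq M$; what remains is to choose a system of positive roots that is \emph{realized} by an actual Borel subgroup scheme over $S$. Over a field this is automatic, but over a general base one must know a Borel exists. Here is where I would invoke that $A$ is semi-local: by \cite[Exp.~XXVI, Cor.~2.3 / Exp.~XXII]{SGA3}, the scheme $\mathrm{Bor}(G)$ of Borel subgroups of $G$ is smooth and projective over $S$, and over a semi-local ring a smooth projective scheme with sections over each residue field — which exist here because each geometric fibre of $G$ is split reductive once it contains the split maximal torus $T_{k(\mathfrak p)}$ — has an $A$-point. (Equivalently, one can quote that over a semi-local ring every reductive group scheme is quasi-split, and a quasi-split group containing a split maximal torus is split.)

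Having produced a Borel $B \supseteq T$, the positive system $R^+ \subseteq R$ it determines on each fibre is cut out by a locally constant, hence constant, subset of the finite constant set $R$; so $(T, B)$ together with the constant root datum $(M, R, M^\vee, R^\vee)$ is a pinning of $G$ over $S$ in the sense of \cite[Exp.~XXII, XXIII]{SGA3}, which is exactly the assertion that $G$ is split. The main obstacle is the middle step — the existence of the Borel over $S$ — and I expect the cleanest route is the semi-local point-lifting argument for the smooth projective $S$-scheme $\mathrm{Bor}(G)$, using splitness on each closed fibre to guarantee residue-field points; the rest is bookkeeping with the constancy of locally constant functions on the connected scheme $\mathrm{Spec}(A)$.
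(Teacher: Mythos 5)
The paper offers no argument of its own here --- its ``proof'' is the single citation \cite[Exp.~XXII, Prop.~2.2]{SGA3} --- so your proposal has to be judged against that proof, and it has two genuine gaps. First, you have misplaced where the semi-local hypothesis actually does its work. In SGA3 (and in the definition the paper itself gives in Section 2), $G$ being \emph{split} relative to $T=D_S(M)$ means: the weight decomposition $\mathfrak g=\mathfrak t\oplus\bigoplus_\alpha\mathfrak g_\alpha$ exists with each root space $\mathfrak g_\alpha$ a \emph{free} $\sO_S$-module of rank one, and the roots and coroots are \emph{constant} sections of $M_S$; no Borel subgroup appears in the definition. Constancy of the roots does follow from connectedness of $\Spec(A)$, as you say, but the crux of Prop.~2.2 is the freeness: each $\mathfrak g_\alpha$ is a priori only a line bundle on $S$, and it is the vanishing of $\Pic(A)$ for a semi-local ring that makes it free (and hence lets one trivialize the root groups $U_\alpha\cong\mathbb G_a$). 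Your write-up never addresses this point; ``the eigenspace decomposition is defined over $A$'' is not where the difficulty lies.

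Second, the step you single out as the main obstacle --- producing a Borel by lifting residue-field points of the smooth projective $S$-scheme $\mathrm{Bor}(G)$ --- rests on a false principle. A smooth projective scheme over a non-henselian semi-local ring with a rational point on each closed fibre need not have a section: the conic $x^2+y^2=(1-t)z^2$ over $\R[t]_{(t)}$ is smooth and projective, has a real point on the closed fibre, but has no point over $\R(t)$ (take an ordering with $t>1$), hence no section. Likewise the parenthetical ``over a semi-local ring every reductive group scheme is quasi-split'' is false already over fields, which are semi-local: anisotropic groups exist. The correct logical order is the reverse of yours: one first establishes splitness in the sense above (constant roots plus free root spaces, using connectedness and $\Pic(A)=0$), and only afterwards constructs a Borel containing $T$ by choosing a constant system of positive roots and taking the subgroup generated by $T$ and the corresponding $U_\alpha$, as in \cite[Exp.~XXII, 5.5.1]{SGA3} --- which is in fact how the paper later obtains its split Borel in Lemma 3.1. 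The Borel is an output of splitness, not an input to it.
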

\begin{proof} {\sl Cf.} \cite[Chapter~XXII, Proposition~2.2]{SGA3}.
\end{proof}
\begin{cor}\label{cor:split1}
Assume that $A$ is a henselian local ring and $G \xrightarrow{\pi} S$
is a reductive group scheme. Then there is a finite Galois 
extension $S' \to S$ such that $G_{S'}$ is split over $S'$. In particular,   
every reductive group scheme over a strictly henselian local ring is
split in the above sense.
\end{cor}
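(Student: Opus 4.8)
The plan is to produce a maximal torus of $G$ defined over $S$ itself, to split it after passing to a suitable finite Galois cover of $S$, and then to invoke \propref{prop:split0}. For the first step, let $\mathrm{Tor}_{G/S}$ denote the scheme of maximal tori of $G$; by \cite[Chapter~XII]{SGA3} this is representable by a smooth affine $S$-scheme. Write $L$ for the residue field of $A$ and let $s \in S$ be the closed point. By Grothendieck's theorem (\cite[Chapter~XIV]{SGA3}) the reductive group $G_L$ over the field $L$ contains a maximal torus defined over $L$, so the fibre $(\mathrm{Tor}_{G/S})_s = \mathrm{Tor}_{G_L/L}$ has an $L$-rational point. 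Since $A$ is henselian local and $\mathrm{Tor}_{G/S} \to S$ is smooth, this $L$-point lifts to an $A$-point; that is, there is a maximal torus $T \subset G$ defined over $S$ whose special fibre is the chosen maximal torus of $G_L$.

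Next I would split $T$. The torus $T_L$ over the field $L$ is split by a finite separable extension of $L$, so, passing to a Galois closure, we may fix a finite Galois extension $L'/L$ over which $T_L$ becomes split. Because $A$ is henselian local, base change to the closed fibre gives an equivalence between the category of finite \'etale $S$-schemes and that of finite \'etale $L$-schemes; let $S' = \Spec(A') \to S$ be the finite \'etale Galois cover corresponding to $L'/L$, so that $A'$ is again henselian local with residue field $L'$ and closed point $s'$ lying over $s$. Now $T_{S'}$ is a torus over $S'$, and a torus over a henselian local ring is split if and only if its restriction to the closed point is split: tori of a given rank over a connected base are forms of $\G_m^{\,r}$ for the \'etale topology, classified by a continuous action on the character lattice, and the \'etale fundamental group of a henselian local ring coincides with the absolute Galois group of its residue field. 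Since the restriction of $T_{S'}$ to $s'$ is $(T_L)_{L'}$, which is split by construction, $T_{S'}$ is split over $S'$.

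It remains to assemble the pieces. As $G_{S'}$ contains the split maximal torus $T_{S'}$, \propref{prop:split0} shows that $G_{S'}$ is split over $S'$, which proves the first assertion. If moreover $A$ is strictly henselian, then $L$ is separably closed, so every torus over $L$ --- in particular $T_L$ --- is already split; one may then take $L' = L$ and $S' = S$ above, and conclude that $G$ itself is split over $S$.

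I expect the first step to be the real obstacle. Once one knows the smoothness of $\mathrm{Tor}_{G/S}$ and has a rational maximal torus in the special fibre, the descent from the closed fibre to $S$ is immediate from the henselian lifting property; the genuine inputs are precisely (i) that the functor of maximal tori of a reductive group scheme is representable and smooth over the base, and (ii) that over a field a reductive group has a maximal torus defined over that field. Both are theorems of Grothendieck in \cite{SGA3}. By contrast, the second step is the standard dictionary between tori and Galois lattices together with the invariance of the finite \'etale site of a henselian local ring under passage to the closed point, and the third step is a direct application of \propref{prop:split0}.
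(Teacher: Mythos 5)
Your proof is correct and follows essentially the same route as the paper: produce a maximal torus of $G$, split it over a finite Galois cover of $S$ using the henselian hypothesis (the equivalence of finite \'etale covers of $S$ with those of the residue field, i.e.\ \cite[X]{SGA2}), and then invoke \propref{prop:split0}. The only difference is one of detail: where the paper simply cites SGA3 for the existence of a maximal torus ``in the \'etale topology,'' you spell out how to descend it to $S$ itself via the smoothness of the scheme of maximal tori and the henselian lifting property, which is a more careful rendering of the same argument.
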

\begin{proof} Since $G$ is reductive, it has a maximal torus $T$ in the
{\'e}tale topology on $S$ by \cite[XXII, Th{\'e}oreme~1.7]{SGA3}.
Since $A$ is henselian, this $T$ is split over a finite Galois extension
of $S$ by \cite[X, Corollary~4.6]{SGA2}. Hence $G$ is given by a root 
system over a finite Galois extension of $S$ by 
Proposition~\ref{prop:split0}. In particular, if $A$ is strictly henselian, 
such a group scheme must be split over $S$ itself.
\end{proof}
All the affine group schemes in sight will be assumed to be connected and
reductive although most of the results of this paper hold for all smooth and 
affine group schemes if the base field is of characteristic zero.

Let $G \xrightarrow{\pi} S$ be a reductive group scheme as before.
The coordinate ring $A[G]$ of $G$ is a Hopf algebra over $A$ such that
$G$ is the spectrum of $A[G]$. In this case, the category of 
$G$-equivariant finitely generated $A$-modules with the trivial action
of $G$ on $S$ is same as the category of finitely generated 
$A[G]$-comodules and is an abelian category ({\sl cf.} \cite{Serre}).
Recall here that a finitely generated $A[G]$-comodule means an 
$A[G]$-comodule which is finitely generated as an $A$-module.
In the same way, the exact category of $G$-equivariant vector bundles
over $S$ with the trivial $G$-action on $S$ is same as the category of
finitely generated $A[G]$-comodules which are projective (and hence free)
as $A$-modules. We refer to {\sl loc. cit.} for the further details on this
equivalence. Let $G^G(A)$ denote the spectrum of the $K$-theory of the
abelian category of finitely generated $A[G]$-comodules, and let
$K^G(A)$ denote the $K$-theory spectrum of the exact category of
finitely generated $A[G]$-comodules which are projective over $A$.
Then there is a natural map of spectra $K^G(A) \to G^G(A)$.
We shall need the following result repeatedly in this paper.
\begin{prop}\label{prop:resolution}
Let $A$ be a regular semi-local ring (or a field) and let $G$ be either a 
split reductive group or a torus over $S = {\rm Spec}(A)$. Let $X$ be a 
smooth quasi-projective $S$-scheme with a $G$-action. Then the map 
$K^G(X) \to G^G(X)$ is a weak equivalence. In particular, if $G$ is any 
reductive group over a strictly henselian regular local ring, then the map 
$K^G(A) \to G^G(A)$ is a weak equivalence.
\end{prop}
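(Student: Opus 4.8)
The goal is to prove that for a split reductive group or torus $G$ over a regular semi-local ring $A$, and a smooth quasi-projective $S$-scheme $X$ with a $G$-action, the forgetful map $K^G(X) \to G^G(X)$ is a weak equivalence; by the usual $K$-versus-$G$ comparison, this amounts to showing that every $G$-equivariant coherent sheaf on $X$ admits a finite resolution by $G$-equivariant vector bundles. The plan is to reduce this to a resolution property for the equivariant category, and then to exploit the fact that $X$ is \emph{smooth} over the regular base $S$, so that $X$ itself is a regular scheme of finite Krull dimension. The key input is that the Waldhausen/Quillen resolution theorem applies once one knows (i) every $G$-equivariant coherent sheaf on $X$ is a quotient of a $G$-equivariant vector bundle, and (ii) the kernel of such a surjection, after enough steps, is again a $G$-equivariant vector bundle.

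First I would address the surjectivity statement (i). For this one uses that $X$, being smooth quasi-projective over an affine base $S = \Spec(A)$, admits a $G$-equivariant ample family of line bundles — here the splitness of $G$ (or the torus hypothesis) is what guarantees that the $G$-action can be linearized on a suitable ample bundle, via the fact that over a semi-local ring Picard-type obstructions vanish and, by Proposition~\ref{prop:split0}, $G$ is given by a root datum so its representation theory is well-behaved. Concretely, one embeds $X$ $G$-equivariantly into a projective space $\P(V)$ for a $G$-representation $V$ (possible because $G$ is linearly reductive over $S$ in the split case, or at worst because the relevant line bundle powers carry $G$-linearizations), and then $\sO_X(n)$ for $n \gg 0$ are $G$-equivariant and generated by $G$-equivariant global sections that are finitely generated and projective over $A$; twisting a coherent sheaf down and using these gives the desired surjection from a $G$-equivariant vector bundle. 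For the last assertion of the proposition, when $A$ is strictly henselian, Corollary~\ref{cor:split1} says every reductive $G$ is automatically split, so the general case follows from the split case with $X = S$.

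Next I would invoke the resolution theorem. Given a $G$-equivariant coherent sheaf $\sF$ on $X$, choose a surjection $\sE_0 \surj \sF$ from a $G$-equivariant vector bundle as above, let $\sF_1 = \ker$, and repeat. Since $X$ is regular of finite Krull dimension $d$ (as $A$ is regular of finite dimension and $X$ is smooth over $\Spec A$), the syzygy $\sF_d$ appearing after $d$ steps is locally free — this is the classical Serre-type argument on a regular scheme, and it is unaffected by the equivariant structure since local freeness is checked after forgetting the $G$-action. Thus $0 \to \sF_d \to \sE_{d-1} \to \cdots \to \sE_0 \to \sF \to 0$ is a finite $G$-equivariant locally free resolution. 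Quillen's resolution theorem then yields that the inclusion of the exact category of $G$-equivariant vector bundles into $G$-equivariant coherent sheaves induces a weak equivalence on $K$-theory spectra, which is exactly $K^G(X) \xrightarrow{\sim} G^G(X)$.

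The main obstacle I anticipate is establishing (i) rigorously: one must be careful that the ample line bundles on $X$ really do carry $G$-linearizations and that one has \emph{enough} equivariant vector bundles (not just equivariant line bundles) to surject onto an arbitrary equivariant coherent sheaf. Over a field this is Thomason's equivariant resolution property; over a regular semi-local base one needs the analogous statement, and the splitness hypothesis on $G$ is precisely what makes the representation-theoretic part go through — the global sections $\pi_*\sO_X(n)$ decompose into $G$-representations that are finitely generated projective over $A$ because over a semi-local ring finitely generated projective modules are free and the comodule structure is controlled by the root datum. Once the equivariant resolution property is in hand, everything else is the standard resolution-theorem machinery. I would cite Thomason's equivariant $K$-theory papers for the resolution property over a general base in the split reductive case, and note that the torus case is even more elementary since a diagonalizable group is linearly reductive over any base.
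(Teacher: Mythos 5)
Your proposal is correct and follows essentially the same route as the paper: establish the equivariant resolution property for $(G,S,S)$ by citing Thomason's \emph{Equivariant resolution, linearization, and Hilbert's fourteenth problem} (the paper uses Corollary~2.9 there, and then Lemma~2.10 to pass from $S$ to the smooth quasi-projective $X$), deduce finite equivariant locally free resolutions from regularity, apply the resolution theorem, and handle the strictly henselian case via Corollary~\ref{cor:split1}. The only caveat is your parenthetical claim that a split reductive $G$ is linearly reductive over $S$ (false in positive residue characteristic), but this does no harm since you ultimately defer to Thomason's resolution property exactly as the paper does.
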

\begin{proof} Since $A$ is regular and $G$ is split reductive or torus, every 
finitely generated $G$-equivariant $A$-module has a $G$-equivariant
resolution by $G$-equivariant vector bundles, as shown in 
\cite[Corollary~2.9]{Thomason0}. In particular, $(G, S, S)$ has
the resolution property in the notation of \cite{Thomason0}.
Now since $X$ is smooth and quasi-projective over $S$,  we conclude
from [{\sl loc. cit.}, Lemma~2.10] that every $G$-equivariant coherent
sheaf on $X$ has a finite $G$-equivariant resolution by
$G$-equivariant vector bundles. In particular, the map
$K^G(X) \to G^G(X)$ is a weak equivalence.
The case of strictly henselian ring now follows from this and
Corollary~\ref{cor:split1}.
\end{proof} 

Let ${\sM}^G(A)$ (or ${\sM}^G(S)$) denote the abelian category of 
$G$-equivariant finitely generated $A$-modules. For $p \ge 0$, let 
${\sM}^G_p(A)$ denote the Serre subcategory of those $G$-equivariant $A$ 
modules which are supported on a closed subscheme of codimension at least $p$ 
on $S$. Recall that since $G$ acts trivially on $S$, every subscheme of $S$ 
is $G$-invariant. The following lemma is now elementary.
\begin{lem}\label{lem:elem}
Let $i : Z \inj S$ be a closed subscheme and let $M$ be a coherent
${\sO}_Z$-module such that $i_*(M) \in {\sM}^G(S)$. Then  
$M \in{\sM}^G(Z)$.
\end{lem}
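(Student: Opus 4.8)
The plan is to unwind the statement. We have a closed immersion $i : Z \inj S$ with $S = \Spec(A)$ carrying the trivial $G$-action, and a coherent $\sO_Z$-module $M$ such that the pushforward $i_*(M)$ — which is a priori only a coherent $\sO_S$-module — is equipped with a $G$-equivariant structure, i.e. lies in $\sM^G(S)$. We must produce a $G$-equivariant structure on $M$ itself, compatible with the ambient one under $i_*$. Since $G$ acts trivially on $S$ (and on $Z$), a $G$-equivariant structure on a coherent sheaf is the same thing as an $A[G]$-comodule structure on the corresponding module, i.e. a coaction $\rho : M \to M \otimes_A A[G]$ satisfying the coassociativity and counit axioms; for $i_*M$ this is a coaction $\ov\rho : i_*M \to (i_*M) \otimes_A A[G]$ of $A$-modules.

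First I would translate everything to modules. Write $Z = \Spec(B)$ with $B = A/I$ (locally; one reduces to the affine case at once since $S$ is affine and $Z$ is a closed subscheme). Then $M$ is a finitely generated $B$-module, and $i_*M$ is the same abelian group $M$ regarded as an $A$-module through $A \surj B$. The given data is an $A$-linear map $\ov\rho : M \to M \otimes_A A[G]$. The key observation is that $M \otimes_A A[G]$ is canonically a $B$-module — indeed $M \otimes_A A[G] = M \otimes_B (B \otimes_A A[G]) = M \otimes_B B[G]$, where $B[G] = A[G] \otimes_A B$ is the coordinate ring of $G_Z = G \times_S Z$ — and I claim $\ov\rho$ is automatically $B$-linear. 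To see this, note $\ov\rho$ is $A$-linear and $M$ is a $B = A/I$-module; the source $M$ has $I$ acting as zero, while on the target $M \otimes_A A[G] = M \otimes_B B[G]$ the ideal $I$ also acts as zero (since it does so on the $M$ factor). Hence for $a \in I$ and $m \in M$, $\ov\rho(am) = 0 = a\,\ov\rho(m)$, and so $\ov\rho$ factors through an $A/I = B$-linear map $\rho : M \to M \otimes_B B[G]$. This $\rho$ is the candidate comodule structure on $M$ over $B$.

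Next I would check that $\rho$ satisfies the comodule axioms, which is immediate: the coassociativity and counit diagrams for $\rho$ are obtained from those for $\ov\rho$ simply by observing that the relevant maps ($\ov\rho \otimes \id$, $\id \otimes \Delta$, $\id \otimes \epsilon$) are all already $B$-linear — they are base changes along $A \to B$ of the corresponding $A[G]$-structure maps — so the commutativity of the diagrams over $A$ yields commutativity over $B$. Concretely, $\rho$ is a $B[G]$-comodule structure on $M$, i.e. $M \in \sM^G(Z)$, and by construction $i_*$ of this structure is $\ov\rho$, the given equivariant structure on $i_*M$. This is exactly what Definition/Proposition~\ref{prop:resolution} and the surrounding discussion set up, so no compatibility is lost.

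I do not anticipate a genuine obstacle here; the lemma is elementary as the authors say, and the only point needing a moment's care is the identification $M \otimes_A A[G] \cong M \otimes_B B[G]$ together with the automatic descent of $A$-linearity to $B$-linearity for a map between $B$-modules — a standard fact about modules over a quotient ring. If one wanted to be completely scheme-theoretic rather than reducing to rings, the same argument runs with $i_*$ exact on the full subcategory of $\sO_S$-modules annihilated by $I$, and $i^* i_* M = M$, $i^*(i_*M \otimes_{\sO_S} A[G]) = M \otimes_{\sO_Z} B[G]$, so applying $i^*$ to the coaction of $i_*M$ gives the coaction of $M$. The mild subtlety to flag, if any, is only the implicit claim that a $G$-equivariant structure on a coherent sheaf over a base with trivial action is literally a comodule structure over the coordinate ring — but this is recalled in the excerpt just before the lemma.
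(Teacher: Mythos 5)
Your proof is correct and is essentially the paper's argument written out in full: the paper merely remarks that the preimage of $Z$ under the (trivial) action map is $G\times Z$, which in comodule language is exactly your identification $M\otimes_A A[G]\cong M\otimes_B B[G]$ together with the automatic $B$-linearity of the coaction. The details you supply (factoring $\ov\rho$ through the quotient and transferring the coassociativity/counit diagrams) are precisely the ones the paper skips.
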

\begin{proof} This follows easily from the definition of the group
action and the fact that the inverse image of $Z$ under the action map
$G \times S \to S$ is $G \times Z$. We skip the details.
\end{proof}

For $p \ge 0$, let $S_p$ denote the set of all codimension $p$ points of $S$.
From the above lemma, we see following Quillen's techniques 
({\sl cf.} \cite{Quillen}) that there is a 
finite filtration of ${\sM}^G(A)$ by Serre subcategories
\[
{\sM}^G(A) = {\sM}^G_0(A) \supset {\sM}^G_1(A) \supset \cdots 
\]
such that for each $p \ge 0$, one has 
\[
\frac{{\sM}^G_p(A)}{{\sM}^G_{p+1}(A)}
\xrightarrow{\cong}
\stackrel{}{\underset {s \in S_p}{\coprod}}
{\stackrel{}{\underset {n}{\bigcup}} 
{\sM}^G\left({\sO}_{S, s}/{{\mathfrak m}^n_{S, s}}
\right)}.
\]
Moreover, if $M \in {\sM}^G\left({\sO}_{S, s}/{{\mathfrak m}^n_{S, s}}
\right)$, then each ${{\mathfrak m}^i_{S, s}}/{{\mathfrak m}^n_{S, s}} M$ is 
in fact a $G$-equivariant submodule of $M$ for $i \le n$ and hence there is
a finite filtration of $M$ by such subsheaves such that each graded
quotient is a $k(s)$-module and hence is in ${\sM}^G\left(k(s)\right)$ 
by Lemma~\ref{lem:elem}. The Devissage theorem 
({\sl cf.} \cite[Theorem~4]{Quillen}) now implies that the map
$G^G\left(k(s)\right) \to 
K\left({\sM}^G\left({\sO}_{S, s}/{{\mathfrak m}^n_{S, s}}\right)\right)$
is a weak equivalence and hence we have the weak equivalence
\begin{equation}\label{eqn:quillen}
K\left(\frac{{\sM}^G_p(A)}{{\sM}^G_{p+1}(A)}\right) \cong
\stackrel{}{\underset {s \in S_p}{\coprod}}
K^G\left(k(s)\right) \ {\forall} \ p \ge 0.
\end{equation}
The equivariant version of Quillen localization sequence ({\sl cf.} 
\cite{Thomason3})
now gives for each $p \ge 0$, a fibration sequence
\begin{equation}\label{eqn:quillen1} 
K\left({\sM}^G_{p+1}(A)\right) \to K\left({\sM}^G_p(A)\right)
\to \stackrel{}{\underset {s \in S_p}{\coprod}}
K^G\left(k(s)\right).
\end{equation} 
Thus we have shown the existence of Quillen spectral sequence in the 
equivariant setting.
\begin{prop}\label{prop:quillen2}
There is a strongly convergent spectral sequence
\[
E^{pq}_1 = \stackrel{}{\underset {s \in S_p}{\coprod}}
K^G_{-p-q}\left(k(s)\right) \Rightarrow G^G_{-n}(A).
\]
\end{prop}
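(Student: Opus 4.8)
The plan is essentially formal: the geometric content---equivariant localization and d\'evissage---has already been packaged into the fibration sequences~\eqref{eqn:quillen1} and the layer identification~\eqref{eqn:quillen}, so all that remains is to turn a finite tower of spectra into its homotopy spectral sequence and to check that the convergence is strong.

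First I would record that the filtration $\sM^G(A) = \sM^G_0(A) \supset \sM^G_1(A) \supset \cdots$ is finite: since $A$ is essentially of finite type over a field it has finite Krull dimension $d := \dim A$, and $\sM^G_p(A) = 0$ once $p > d$ because $S = \Spec(A)$ carries no nonempty closed subscheme of codimension exceeding $d$. Put $X_p := K(\sM^G_p(A))$ for $0 \le p \le d$, $X_p := X_0 = G^G(A)$ for $p \le 0$, and $X_p := 0$ for $p > d$. By~\eqref{eqn:quillen1} the maps $X_{p+1} \to X_p$ sit in fibration sequences whose cofibers are, via~\eqref{eqn:quillen}, weakly equivalent to $\coprod_{s \in S_p} K^G(k(s))$. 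Taking homotopy groups, the associated long exact sequences assemble into an exact couple with $D_1^{p,q} = \pi_{-p-q}(X_p)$ and $E_1^{p,q} = \pi_{-p-q}\!\bigl(\coprod_{s\in S_p} K^G(k(s))\bigr) = \coprod_{s\in S_p} K^G_{-p-q}(k(s))$, the three structure maps being induced by $X_{p+1}\to X_p$, by $X_p \to \cofib(X_{p+1}\to X_p)$, and by the connecting homomorphisms. The standard machinery of exact couples then produces a spectral sequence with the stated $E_1$-page, whose abutment is $\pi_{-n}(X_0) = G^G_{-n}(A)$.

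Finally I would verify strong convergence. Because the tower has only finitely many nonzero terms, in each total degree $n$ only finitely many of the groups $E_1^{p,q}$ with $p+q=-n$ are nonzero, the induced filtration on $G^G_{-n}(A)$ is finite (hence exhaustive and Hausdorff), and for $r \gg 0$ all differentials entering or leaving a given spot vanish, so $E_r^{p,q} = E_\infty^{p,q}$ for $r$ large. The only input beyond formal bookkeeping is thus the boundedness of the filtration, i.e.\ $\dim A < \infty$; this is what upgrades the spectral sequence from conditionally to strongly convergent, and it is the point I would be most careful to state explicitly, even though here it is immediate from the hypotheses on $A$.
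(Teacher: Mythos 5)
Your proposal is correct and follows exactly the route the paper intends: the paper treats the proposition as an immediate consequence of the filtration of $\sM^G(A)$, the d\'evissage identification~\eqref{eqn:quillen}, and the localization fibrations~\eqref{eqn:quillen1}, leaving the exact-couple bookkeeping and the finiteness of the filtration (hence strong convergence) implicit. You have simply written out those standard details, including the one point worth making explicit, namely that $\dim A < \infty$ bounds the filtration.
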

$\hfill \square$
\\
The following equivariant analogue of \cite[Proposition]{Quillen} 
follows directly from ~\ref{eqn:quillen1} and the spectral sequence
in Proposition~\ref{prop:quillen2}
\begin{cor}\label{cor:quillen3}
The following are equivalent. \\
$(i)$ For all $p \ge 0$, the inclusion 
${\sM}^G_{p+1}(A) \to {\sM}^G_{p}(A)$ induces zero map on the $K$-groups. \\
$(ii)$ For all $i \in \Z$, the sequence 
\[
0 \to K^G_i(A) \xrightarrow{j} K^G_i(F) \to
\stackrel{}{\underset {s \in S_1}{\coprod}}
K^G_{i-1}\left(k(s)\right) \to \cdots
\]
is exact.
\end{cor}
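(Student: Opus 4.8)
The plan is to transport Quillen's original argument (\cite{Quillen}) to the equivariant setting; the only inputs are the localization fibrations \eqref{eqn:quillen1}, the d\'evissage identification \eqref{eqn:quillen}, and the identification $K_i({\sM}^G_0(A)) = G^G_i(A) \cong K^G_i(A)$ coming from \propref{prop:resolution}, together with $S_0 = \{\eta\}$ and $k(\eta) = F$. Write $\iota_p$, $\psi_p$, $\partial_p$ for the maps in the long exact homotopy sequence of \eqref{eqn:quillen1},
\[
\cdots \to K_i({\sM}^G_{p+1}(A)) \xrightarrow{\iota_p} K_i({\sM}^G_p(A)) \xrightarrow{\psi_p} \coprod_{s \in S_p} K^G_i(k(s)) \xrightarrow{\partial_p} K_{i-1}({\sM}^G_{p+1}(A)) \to \cdots ,
\]
so that $\iota_p$ is the map induced by the inclusion ${\sM}^G_{p+1}(A) \to {\sM}^G_p(A)$, the first map $j$ of the sequence in $(ii)$ is $\psi_0$, and the differential of that sequence out of the codimension-$p$ term is the composite $\psi_{p+1} \circ \partial_p$. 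Thus $(i)$ is exactly the assertion that $\iota_p = 0$ in every degree, for all $p$.

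For $(i) \Rightarrow (ii)$: once every $\iota_p$ vanishes, each long exact sequence breaks into short exact sequences
\[
0 \to K_i({\sM}^G_p(A)) \xrightarrow{\psi_p} \coprod_{s \in S_p} K^G_i(k(s)) \xrightarrow{\partial_p} K_{i-1}({\sM}^G_{p+1}(A)) \to 0 ,
\]
and splicing these over $p$ reproduces the sequence in $(ii)$. Indeed, at the codimension-$p$ term with $p \ge 1$ the image of the incoming differential is $\psi_p(\im\,\partial_{p-1}) = \im\,\psi_p$ since $\partial_{p-1}$ is surjective, while $\ker(\psi_{p+1}\circ\partial_p) = \ker\,\partial_p = \im\,\psi_p$ since $\psi_{p+1}$ is injective; and exactness at the two leftmost spots is the injectivity of $\psi_0$ and the equality $\ker(\psi_1\circ\partial_0) = \ker\,\partial_0 = \im\,\psi_0$. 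Hence $(ii)$ holds.

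For $(ii) \Rightarrow (i)$: I would prove $\iota_p = 0$ in all degrees by descending induction on $p$, the case $p + 1 > \dim A$ being vacuous because then ${\sM}^G_{p+1}(A) = 0$ (recall $A$ has finite Krull dimension). Assume $\iota_q = 0$ in all degrees for every $q > p$. Then $\psi_{p+1}$ and $\psi_{p+2}$ are injective, their kernels being images of such $\iota$'s; hence for each $m$ the map $\psi_{p+1}$ identifies $K_m({\sM}^G_{p+1}(A))$ with $\ker\,\partial_{p+1} = \ker(\psi_{p+2}\circ\partial_{p+1})$, which is the kernel of a differential of the sequence in $(ii)$. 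Exactness of $(ii)$ at that spot equates this kernel with the image $\psi_{p+1}(\im\,\partial_p)$ of the preceding differential; cancelling the injective $\psi_{p+1}$ gives $\im\,\partial_p = K_m({\sM}^G_{p+1}(A))$, i.e. $\iota_p = 0$ in degree $m$, which closes the induction.

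I do not expect a substantial obstacle: both implications are formal diagram chases once \eqref{eqn:quillen1} and \eqref{eqn:quillen} are granted, exactly as in \cite{Quillen}. The only points needing care are the bookkeeping that identifies the spliced differentials with those of the sequence in $(ii)$, and the legitimacy of the descending induction in $(ii) \Rightarrow (i)$, which rests on the finiteness of the codimension filtration of ${\sM}^G(A)$, i.e. on $\dim A < \infty$. Equivalently, the equivalence can be read off \propref{prop:quillen2}: condition $(i)$ is precisely the vanishing $E_2^{pq} = 0$ for $p \ge 1$ together with the abutment being concentrated in codimension $0$, and this unwinds to the exactness in $(ii)$.
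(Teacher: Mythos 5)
Your argument is correct and is exactly the route the paper intends: the paper's "proof" is just the remark that the equivalence follows from the localization fibrations \eqref{eqn:quillen1} (together with the d\'evissage identification \eqref{eqn:quillen} and \propref{prop:quillen2}), i.e.\ Quillen's original splicing/descending-induction argument, which you have simply written out in full, including the finite-dimensionality of $A$ needed to start the induction. No discrepancy to report.
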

$\hfill \square$
\\ 
\section{Equivariant Gersten Conjecture}
Before we prove our Theorem~\ref{thm:Gersten}, we recall from \cite{Krishna}
that since the group scheme $G$ acts trivially on $S$, there is a natural
exact functor ${\sM}(S) \to {\sM}^G(S)$ via the trivial action. Here ${\sM}(S)$
is the category of coherent $S$-modules. This induces a natural map
$G(S) \xrightarrow{f} G^G(S)$ of spectra. Since $G^G_i(S)$ is an 
$R_A(G)$-module, we get a natural map 
\begin{equation}\label{eqn:tensor}
G_i(S) {\otimes}_{\Z} R_A(G) \to G^G_i(S)
\end{equation}
\[
{\alpha} {\otimes} {\rho} \mapsto \left[{\rho} \cdot f(\alpha)\right].
\]
{\bf{Proof of Theorem~\ref{thm:Gersten}:}}
We first assume that $G = T$ is a split torus over $S$ and put $T = D_S(M)$,
where $M$ is a free abelian group of finite type. In this case, every
$T$-equivariant $A$-module $E$ is canonically identified with an $M$-graded 
$A$-module of finite type ({\sl cf.} \cite[Section~3.4]{Serre}). In
particular, one has a canonical decomposition 
$E = \stackrel{}{\underset {\lambda \in M}{\coprod}} E_{\lambda}$. This shows 
that there is a canonical equivalence 
\begin{equation}\label{eqn:tensor0}
G(S)[M] = \stackrel{}{\underset {\lambda \in M}{\coprod}} {G(S)}_{\lambda} 
\to G^G(S).
\end{equation}
Since $K_*\left(G(S)[M]\right) = G_*(S)[M] = G_*(S) {\otimes}_{\Z}
R_A(G)$, we conclude from ~\ref{eqn:tensor0} that the map in 
~\ref{eqn:tensor} is an isomorphism.
Furthermore, we can use Proposition~\ref{prop:resolution} to replace these
$G$-groups with $K$-groups. By the same reason, we have for any point
$s \in S$, a canonical isomorphism 
$K_i\left(k(s)\right) {\otimes}_{\Z} R_{k(s)}(G) \xrightarrow{\cong} 
K^G_i\left(k(s)\right)$. Since $R_A(G) \cong {\Z}[M] \cong R_{k(s)}(G)$,
we conclude that the equivariant Gersten sequence in the split torus case is 
simply the tensor product of the non-equivariant Gersten sequence with 
${\Z}[M]$. Now the non-equivariant Gersten conjecture together with the
flatness of ${\Z}[M]$ as $\Z$-module imply that the equivariant Gersten
sequence is exact. This proves the case of split torus.

We now prove the general case. In view of Corollary~\ref{cor:quillen3}, 
it suffices to show that the map ${\sM}^G_{p+1}(A) \to {\sM}^G_{p}(A)$ 
induces zero map on the $K$-groups for all $p \ge 0$. We choose a split 
maximal torus $T$ inside $G$. Then we get the following commutative
diagram.
\begin{equation}\label{eqn:GT}
\xymatrix{
K_*\left({\sM}^G_{p+1}(A)\right) \ar[r] \ar[d] &
K_*\left({\sM}^T_{p+1}(A)\right) \ar[d] \\
K_*\left({\sM}^G_{p}(A)\right) \ar[r] & K_*\left({\sM}^T_{p}(A)\right)}
\end{equation}
By the proof of the theorem for the torus case and 
Corollary~\ref{cor:quillen3}, we see that the right vertical map is
zero. Thus we only need to show that for all $p \ge 0$, the restriction map
\begin{equation}\label{eqn:GT0}
K_*\left({\sM}^G_{p}(A)\right) \to K_*\left({\sM}^T_{p}(A)\right)
\end{equation}
is injective. 

By Lemma~\ref{lem:elem} and \cite[5.1]{Quillen}, one has
\[
K_*\left({\sM}^G_{p}(A)\right) = \stackrel{}
{\underset {A \surj A', {\rm codim}_A(A') \ge p}{\varinjlim}}
G^G_*(A').
\]
Since the direct limit is an exact functor, it suffices to show that
for any such quotient $A'$, the natural map
\begin{equation}\label{eqn:GT1}
G^G_*(A') \to G^T_*(A')
\end{equation}
is injective. The proof of the theorem is now completed by 
Lemma~\ref{lem:splitting}.
$\hfill \square$
\\
\\
\begin{lem}\label{lem:splitting}
Let $A$ be a noetherian commutative ring such that either it is regular
or essentially of finite type over a field. Let $G$ be a connected and
split reductive group scheme over $A$ with a split maximal torus $T$.
Then the restriction map $G^G_*(A) \to G^T_*(A)$ is split injective.
\end{lem}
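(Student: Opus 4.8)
The plan is to produce an explicit splitting of the restriction map $\res : G^G_*(A) \to G^T_*(A)$ by constructing a left inverse on the level of $K$-theory spectra, using the geometry of the flag variety $G/B$. Let $B$ be a split Borel subgroup scheme of $G$ containing $T$; such a $B$ exists by \propref{prop:split0} together with the structure theory of \cite[Chapter~XXII]{SGA3}, and $B/T$ is an affine space bundle over $S$ while $G/B$ is smooth and projective over $S$. The key classical input is the identification of the equivariant $K$-theory of $G/B$: induction from $B$-modules to $G$-modules, combined with the fact that $B \to T$ induces an equivalence on equivariant $K$-theory (because the unipotent radical is filtered by $\G_a$'s and $\res : G^B_*(-) \to G^T_*(-)$ is an isomorphism for the same reason that makes the torus case in \thmref{thm:Gersten} work), gives a natural identification of $G^T_*(A)$ with $G^G_*(G/B)$. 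I would set this up carefully as the first step.

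The second step is to exhibit the composite I want. Pulling back along $G/B \to S$ gives $G^G_*(A) \to G^G_*(G/B) \cong G^T_*(A)$, and one checks this is the restriction map $\res$. For the splitting, I would use a pushforward: since $G/B \to S$ is smooth and projective, there is a proper pushforward $\pi_* : G^G_*(G/B) \to G^G_*(A)$ on equivariant $G$-theory (this uses that $\pi$ is projective so that $R\pi_*$ preserves coherence, and the equivariant Grothendieck group formalism of \cite{Thomason0, Thomason3}). The composite $\pi_* \circ \pi^*$ is multiplication by the class $[\pi_*\sO_{G/B}] = \chi(G/B,\sO) \in R_A(G)$ acting on $G^G_*(A)$ via the module structure of \eqref{eqn:tensor}. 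Because $G/B$ has a filtrable structure (a Bruhat-type stratification by affine spaces over $S$, which exists after the splitting hypothesis), $R\pi_*\sO_{G/B} = \sO_S$ with trivial $G$-action, so $\pi_*\pi^* = \id$ on the nose. Hence $\frac{1}{|W|}$ is not needed, and $\pi_*$ furnishes an honest splitting of $\res$.

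The main obstacle I expect is the base-change and coherence bookkeeping over a general (non-field) base $A$: identifying $G^T_*(A)$ with $G^G_*(G/B)$ requires knowing that the resolution property holds equivariantly on $G/B$ (so that $K^G(G/B) \simeq G^G(G/B)$, supplied by \propref{prop:resolution}), and that the affine-bundle invariance $G^B_*(A) \cong G^T_*(A)$ and the induction equivalence $G^B_*(A) \cong G^G_*(G/B)$ are valid integrally — not merely rationally — over the semi-local or regular base. The splitness hypothesis on both $G$ and $T$ is exactly what makes $G/B$ and its stratification defined over $S$, so the computation $R\pi_*\sO_{G/B} = \sO_S$ goes through by a \v{C}ech/filtration argument over each residue field followed by flat base change. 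Once these identifications are in place, the argument is formal. An alternative, should the spectrum-level pushforward prove delicate, is to argue directly on homotopy groups: write $G^G_*(A)$ as a limit over quotients as in the proof of \thmref{thm:Gersten}, reduce to $A$ a field (or a regular local ring), and there invoke the known projective-bundle/cellular decomposition of $K^G_*(G/B)$ to split $\res$; but I would prefer the clean spectrum-level statement first.
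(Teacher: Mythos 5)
Your proposal is correct and follows essentially the same route as the paper: factor the restriction through $G^B_*(A)\cong G^G_*(G/B)$ via Thomason's induction equivalence and homotopy invariance for the unipotent radical, then split by the proper pushforward along $G/B\to S$, using the projection formula and $f_*\sO_{G/B}=\sO_S$ (which the paper takes from Kempf rather than re-deriving from the Bruhat stratification). No substantive differences.
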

\begin{proof} Put $S = {\rm Spec}(A)$. Since $G$ is split reductive, it is 
given by a root system and has a split Borel subgroup scheme $B$ containing 
$T$ and $G/B$ is a projective $S$-scheme 
({\sl cf.} \cite[Chapter~XXII, Proposition~5.5.1]{SGA3}).
We also have maps 
\begin{equation}\label{eqn:GT2**}
G^G_*(A) \to G^B_*(A) \xrightarrow{\cong} G^G_*(G/B) \to 
G^G_*(A).
\end{equation}
Here the last map is the push-forward map induced by the proper map
$G/B \xrightarrow{f} S$. 
Moreover, under our assumption on $A$, we can use 
\cite[Remark~1.9(d) and Theorem~1.10]{Thomason1} to see that
the second map above is an isomorphism. By the same reason, the map
$G^T_*(A) \xrightarrow{\cong} G^B_*(B/T)$ is also an isomorphism. 

Using the projection formula
for the smooth and proper map $G/B \xrightarrow{f} S$, we see that
for any $\alpha \in G^G_*(A)$, one has $f_* f^*(\alpha) =
\left[f_*\left({\sO}_{G/B}\right)\right] \cdot {\alpha}$. On the other hand, 
one has $f_*\left({\sO}_{G/B}\right) = {\sO}_{S}$ by \cite[13.2]{Kempf}.
This shows that the composite map in ~\ref{eqn:GT2**} is identity.
On the other hand, the isomorphism $G^T_*(A) \xrightarrow{\cong} G^B_*(B/T)$
and the homotopy invariance implies that the restriction map   
$G^B_*(A) \to G^T_*(A)$ is an isomorphism, proving the lemma.
\end{proof}

{\bf{Proof of Corollary~\ref{cor:Gersten1}:}}
Since $R_A(G) = K^G_0(A)$ and similarly for $F$, the corollary follows 
directly by using Theorem~\ref{thm:Gersten} for $i= 0$ and by noting
that $K^G_{-1}\left(k(s)\right) = 0$ as ${\sM}^G\left(k(s)\right)$ is
an abelian category. To show that $R_A(G)$ is noetherian, it suffices
now to show that $R_F(G)$ is so. Now it follows directly from
\cite[Th{\'e}oreme~5]{Serre} that $R_F(G) = {R_F(T)}^W$, where $T$ is a
maximal split Torus of $G$ and $W$ is the Weyl group. Since
$R_F(T)$ is a truncated polynomial ring over $\Z$ and hence noetherian,
we deduce from \cite[Lemma~4.4]{KV} that $R_F(G)$ is noetherian too.
$\hfill \square$
\\ 
\\
{\bf{Proof of Corollary~\ref{cor:Gersten2}:}}
By Corollary~\ref{cor:Gersten1}, we can replace $A$ by $F$, and then 
it is already shown in \cite[Proposition~2.1]{Krishna1}.
$\hfill \square$
\\ 
\section{Rigidity for Equivariant K-theory}
Let $A$ be the strict henselization of a ring which is either a 
discrete valuation ring or the local ring
of a smooth point of a variety over a field. 
Let $L$ denote the residue field of $A$. We fix a positive integer $n$ which 
is prime to the characteristic of $L$. Let $G$ be a connected and reductive 
group scheme over $A$. 
By Corollary~\ref{cor:split1}, $G$ is split and hence has a split maximal
torus $T$. Moreover, we have seen before that $G$ contains a split Borel
subgroup scheme $B$ containing $T$. We have seen in  the proof of
Lemma~\ref{lem:splitting} that there are maps  
\begin{equation}\label{eqn:GT2}
G^G_*(A) \to G^B_*(A) \xrightarrow{\cong} G^G_*(G/B) \to 
G^G_*(A).
\end{equation}
such that the composite is identity. Put $X = G/B$ and let
$X \xrightarrow{f} S$ be the smooth and proper map as before.
Note then that $G$ naturally acts on $X$ by left multiplication
and $X$ is a homogeneous $G$-space. Let $i : {\rm Spec}(L) \inj S$ be the 
inclusion of the closed point, and let $X_L$ denote the closed fiber of
$X$.
\begin{lem}\label{lem:commute}
The diagram
\[
\xymatrix{
G^G_*\left(X, {\Z}/n \right) \ar[r]^{{\ov i}^*} \ar[d]_{f_*} & 
G^G_*\left(X_L, {\Z}/n \right) \ar[d]^{f^L_*} \\ 
G^G_*\left(A, {\Z}/n \right) \ar[r]_{i^*} &
G^G_*\left(L, {\Z}/n \right)}
\]
is commutative.
\end{lem}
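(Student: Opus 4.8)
The plan is to recognise the square as an instance of proper base change in equivariant $G$-theory. The geometry is in our favour: by construction $X_L = X\times_S \Spec(L)$, so the square in the statement is Cartesian; the structure map $f\colon X = G/B \to S$ is smooth and proper, as recorded above (recall that $G/B$ is projective over $S$ by \cite[Chapter~XXII, Proposition~5.5.1]{SGA3}); and since $A$, hence $S$ and the smooth $S$-scheme $X$, are regular, both $i\colon \Spec(L)\inj S$ and ${\ov i}\colon X_L \inj X$ are regular closed immersions of finite Tor-dimension. Consequently the derived pullbacks $i^{*}$ and ${\ov i}^{*}$ are defined on the equivariant $G$-theory spectra $G^{G}(S)$ and $G^{G}(X)$ — here one uses that $X_L$, being smooth over the field $L$, is itself regular — and they continue to make sense after reduction modulo $n$.

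Granting this, the heart of the argument is the base-change equivalence $i^{*}\circ f_{*}\;\simeq\;f^{L}_{*}\circ {\ov i}^{*}$ between the relevant $G$-theory spectra. I would deduce it from the projection formula together with Tor-independence: smoothness of $f$ makes $X$ and $\Spec(L)$ Tor-independent over $S$ (indeed $\Tor^{\sO_S}_{>0}(\sO_X,\, i_{*}\sO_L) = 0$), so for every $G$-equivariant coherent sheaf $\sF$ on $X$ the canonical comparison map $Li^{*}Rf_{*}\sF \longrightarrow Rf^{L}_{*}L{\ov i}^{*}\sF$ is an isomorphism in the bounded derived category of $G$-equivariant coherent sheaves on $X_L$. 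Non-equivariantly this is the classical proper and flat base-change theorem; in the presence of the $G$-action it follows from Thomason's formalism for group-scheme actions ({\sl cf.} \cite{Thomason1}, \cite{Thomason3}), which applies because $G$ is smooth affine and, by Proposition~\ref{prop:resolution} and \cite{Thomason0}, the schemes $S$ and $X$ have the $G$-equivariant resolution property. Translating this natural equivalence of exact functors into a homotopy-commutative square of $K$-theory spectra, and then smashing with a mod-$n$ Moore spectrum, yields exactly the asserted commutativity with ${\Z}/n$-coefficients.

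The step I expect to require the most care is not the scheme geometry but the equivariant bookkeeping: one must check that $f_{*}$ is genuinely defined on $G^{G}$ of the \emph{non-trivial} homogeneous $G$-space $X = G/B$ (and of its closed fibre $X_L$), that ${\ov i}^{*}$ is a bona fide map out of $G^{G}(X)$, and that the base-change comparison is itself $G$-equivariant, so that the whole computation takes place inside the $G$-equivariant derived category rather than the ordinary one. Once these points are settled the identity is formal. As an alternative in the discrete-valuation-ring case, one could argue directly from the equivariant localization sequence $G^{G}(L)\to G^{G}(A)\to G^{G}(F)$ and its analogue over $X$, using compatibility of $i^{*}$ and ${\ov i}^{*}$ with the boundary maps; but the base-change route has the advantage of handling all the cases permitted by the hypotheses uniformly.
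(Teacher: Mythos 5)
Your proposal is correct and follows essentially the same route as the paper: identify the square as Cartesian, use smoothness of $f$ to get Tor-independence of $X$ and ${\rm Spec}(L)$ over $S$, use regularity of $A$ to see that $i$ and ${\ov i}$ have finite Tor-dimension so the pullbacks are defined, and then invoke the (equivariant) base-change theorem for $G$-theory of Tor-independent squares --- the paper cites the equivariant version of \cite[Proposition~2.11]{Quillen} and \cite{VV} where you cite Thomason's formalism, but it is the same underlying result. The only cosmetic difference is your mention of the projection formula, which is not actually needed for the base-change comparison.
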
  
\begin{proof} We consider the following Cartesian diagram.
\begin{equation}\label{eqn:commute0}
\xymatrix{
X_L \ar[r]^{\ov i} \ar[d]_{f^L} & X \ar[d]^{f} \\ 
{\rm Spec}(L) \ar[r]_{i} & S}
\end{equation}
Now $f$ and $f^L$ are  clearly smooth and proper maps.
Since $A$ is the strict henselization of the local ring of the smooth
point of a $k$-variety, we see that $A$ is a regular local ring.
Hence $i$ and $\ov i$ are complete intersection maps. 
In particular, they are of finite tor-dimension and the pull-back maps $i^*$
and ${\ov i}^*$ are defined. Moreover, since $f$ is smooth,
we see that $X$ and ${\rm Spec}(L)$ are Tor-independent over $S$. 
The lemma now follows from the equivariant version of 
\cite[Proposition~2.11]{Quillen}
(see also \cite[Proof of Theorem~3.2]{VV}).
\end{proof}
{\bf{Proof of Theorem~\ref{thm:rigid}:}}
Put $\Lambda = {\Z}/n$ and write ${R_A(G)}/n$ by $R_A\left(G, \Lambda \right)$.
As in the proof of Theorem~\ref{thm:Gersten}, we 
first prove the case 
when $G = T$ is a split torus. In this case, we have seen before that
$K^G_*(A) \cong K_*(A)[M]$. Using this and the natural short exact
sequence
\begin{equation}\label{eqn:finite}
0 \to {K^G_i(A)}/n \to K^G_i\left(A, \Lambda \right) \to
{\rm Tor}^1_{\Z} \left(K^G_{i-1}(A), \Lambda \right) \to 0
\end{equation}
({\sl cf.} \cite[Lemma~6.3]{Krishna}), we see that the natural map
$K_i\left(A, \Lambda \right) {\otimes}_{\Lambda} R_A\left(G, \Lambda \right)
\to K^G_i \left(A, \Lambda \right)$ is an isomorphism of 
$R_A\left(G, \Lambda \right)$-modules. The same conclusion also holds for
the $K$-theory of $L$. Now the torus case of the theorem follows from
the non-equivariant rigidity 
({\sl cf.} \cite[Corollary~2.5, Corollary~3.9]{Suslin0}) plus
the canonical isomorphism $R_A\left(G, \Lambda \right) \cong {\Lambda}[M]
\cong R_L\left(G, \Lambda \right)$.

We now prove the general case. By Proposition~\ref{prop:resolution}, we can
replace $K$-theory by $G$-theory. Since $A$ is strictly henselian,
the group scheme $G$ has a split maximal torus $T$ and a Borel subgroup
scheme $B$ containing $T$. We put $X = G/B$ and follow the notations
of Lemma~\ref{lem:commute}. We consider the following diagram.

\[
\xymatrix{
G^G_*\left(A, {\Z}/n \right) \ar[r] \ar[d] &
G^T_*\left(A, {\Z}/n \right) \ar[r] \ar[d] &
G^G_*\left(A, {\Z}/n \right) \ar[d] \\
G^G_*\left(L, {\Z}/n \right) \ar[r] & 
G^T_*\left(L, {\Z}/n \right) \ar[r] &
G^G_*\left(L, {\Z}/n \right)}
\]
The left square is clearly commutative since it is just the pull-back
diagram. The right square commutes by  Lemma~\ref{lem:commute}.
We can now combine the exact sequence ~\ref{eqn:finite} and 
Lemma~\ref{lem:splitting} to see that both the composite horizontal
maps in the above diagram are identity. In particular, the left vertical
map is a retract of the middle vertical map. On the other hand,
we have just shown that the middle vertical map is an isomorphism.
We conclude that the left (and the right) vertical map is also an
isomorphism.
$\hfill \square$
\\

We prove Theorem~\ref{thm:integer} along the lines of the Suslin's
proof of a similar result in the non-equivariant setting.
As such, we begin with the following.
\begin{lem}\label{lem:integer*}
Let $F$ be a henselian discretely valued field with the valuation ring
$A$ and residue field $L$ of positive characteristic $p$. Let $G$ be a 
split reductive group scheme over $A$. For any $i \ge 0$, there is a
short exact sequence
\[
0 \to K^G_i \left(L, \Lambda \right) \to K^G_i \left(F, \Lambda \right)
\xrightarrow{d} K^G_{i-1} \left(L, \Lambda \right) \to 0.
\]
\end{lem}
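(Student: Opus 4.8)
The plan is to derive the sequence from the equivariant localization sequence attached to the decomposition of $S = \Spec A$ into the open point $\Spec F$ and the closed point $\Spec L$, together with the equivariant rigidity theorem and a short self-intersection computation. Write $j \colon \Spec F \inj S$ for the open immersion and $i \colon \Spec L \inj S$ for the complementary closed immersion. Since $G$ is split over $A$, its base changes $G_F$ and $G_L$ are split over $F$ and $L$, so Proposition~\ref{prop:resolution} shows that $K^G(A) \to G^G(A)$, $K^G(F) \to G^G(F)$ and $K^G(L) \to G^G(L)$ are weak equivalences; I may therefore run the argument in $G$-theory and translate back to $K$-theory at the end. The equivariant localization theorem (\cite{Thomason3}) provides a fibration sequence $G^G(\Spec L) \to G^G(\Spec A) \to G^G(\Spec F)$, hence, after passing to $\Z/n$-coefficients, a long exact sequence
\[
\cdots \to K^G_i(L,\Lambda) \xrightarrow{i_*} K^G_i(A,\Lambda) \xrightarrow{j^*} K^G_i(F,\Lambda) \xrightarrow{\partial} K^G_{i-1}(L,\Lambda) \xrightarrow{i_*} \cdots .
\]

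Next I would isolate two properties of this sequence. First, since $A$ is a henselian discrete valuation ring with $n$ prime to $p = \Char L$ and $G$ is split, the henselization version of Theorem~\ref{thm:rigid} (recorded in the remark following that theorem) shows that $i^* \colon K^G_*(A,\Lambda) \to K^G_*(L,\Lambda)$ is an isomorphism; note $i^*$ is defined because $A$ is regular, so $i$ is a regular closed immersion and in particular has finite Tor-dimension. Second, I claim the composite $i^* \circ i_* \colon K^G_*(L,\Lambda) \to K^G_*(L,\Lambda)$ is zero. Choosing a uniformizer $\pi$, the Koszul resolution $0 \to A \xrightarrow{\pi} A \to L \to 0$ yields $Li^* i_* \mathcal F \simeq \mathcal F \oplus \mathcal F[1]$ for every $G$-equivariant coherent sheaf $\mathcal F$ on $\Spec L$, and, since $G$ acts trivially on $S$, the conormal line $\mathfrak m/\mathfrak m^2$ carries the trivial equivariant structure, so this is a $G$-equivariant identification. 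By the projection formula for the closed immersion $i$, the operator $i^* i_*$ on $K^G_*(L,\Lambda)$ is then multiplication by $[\sO_{\Spec L}] - [\sO_{\Spec L}] = 0$, and hence vanishes.

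Combining the two, $i_* = (i^*)^{-1} \circ (i^* \circ i_*) = 0$ throughout, so the long exact sequence breaks into short exact sequences
\[
0 \to K^G_i(A,\Lambda) \xrightarrow{j^*} K^G_i(F,\Lambda) \xrightarrow{\partial} K^G_{i-1}(L,\Lambda) \to 0 .
\]
Precomposing $j^*$ with the inverse of the rigidity isomorphism $i^* \colon K^G_i(L,\Lambda) \xrightarrow{\cong} K^G_i(A,\Lambda)$ and putting $d := \partial$ yields the asserted short exact sequence; for $i = 0$ it degenerates to the isomorphism $K^G_0(L,\Lambda) \cong K^G_0(F,\Lambda)$, consistent with $K^G_{-1}(L,\Lambda) = 0$.

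The step I expect to be the main obstacle is the vanishing of $i^* \circ i_*$: one needs the equivariant self-intersection (excess-intersection) formula in Thomason's framework for $G$-equivariant $G$-theory, and one must check carefully that the conormal bundle of $\Spec L$ in $\Spec A$ is trivial as a $G$-equivariant bundle — which should hold because the $G$-action on $S$ is trivial and hence the induced comodule structure on $\mathfrak m/\mathfrak m^2$ is trivial. A secondary point is ensuring the rigidity input is genuinely available for a merely henselian (rather than strictly henselian) valuation ring, which is precisely the refinement of Theorem~\ref{thm:rigid} noted in the remark following it and which relies on $G$ being split. Granting these, the rest is formal manipulation of the localization sequence.
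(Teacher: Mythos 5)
Your proof is correct, but it reaches the key point --- the vanishing of the boundary-adjacent map $i_*\colon K^G_*(L,\Lambda)\to K^G_*(A,\Lambda)$, equivalently the injectivity of $K^G_i(A,\Lambda)\to K^G_i(F,\Lambda)$ --- by a genuinely different route than the paper. The paper replaces $K$ by $G$-theory, reduces to the split maximal torus $T$ via the split injection $G^G_*(A,\Lambda)\to G^T_*(A,\Lambda)$ of Lemma~\ref{lem:splitting}, identifies $G^T_*$ with $G_*\otimes_\Lambda R_A(T,\Lambda)$, and then quotes Suslin's non-equivariant result that $G_i(A,\Lambda)\to G_i(F,\Lambda)$ is split injective for a henselian valuation ring. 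You instead inline the self-intersection argument directly in the equivariant category: the Koszul resolution $0\to A\xrightarrow{\pi}A\to L\to 0$ is a complex of $G$-equivariant modules because the $G$-action on ${\rm Spec \,}(A)$ is trivial, so $i^*i_*$ is $\id - (\,\cdot\,[\mathfrak m/\mathfrak m^2]) = 0$ on $K^G_*(L,\Lambda)$, and composing with the rigidity isomorphism $i^*$ forces $i_*=0$. Both arguments rest on the same two pillars (equivariant localization and the henselian, split-group form of Theorem~\ref{thm:rigid}), but yours avoids the detour through the torus and the citation of Suslin's Corollary~3.11 --- in effect you reprove that corollary equivariantly --- which is arguably cleaner and sidesteps the paper's slightly awkward appeal to Corollary~\ref{cor:Gersten1} for a base that need not be essentially of finite type over a field. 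The one point you should make fully precise is that the operator $i^*i_*$ on higher $K$-groups is legitimately computed as $[{\rm Tor}_0^A(L,-)]-[{\rm Tor}_1^A(L,-)]$: on the image of $i_*$ both functors are exact (each is the identity up to the trivial conormal twist), so the standard finite-Tor-dimension functoriality applies verbatim in the equivariant setting; this is routine but is exactly where the "main obstacle" you flag gets resolved.
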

\begin{proof}
By Theorem~\ref{thm:rigid}, we have isomorphism 
$K^G_i \left(A, \Lambda \right) \cong K^G_i \left(L, \Lambda \right)$. 
We note here
that the statement of Theorem~\ref{thm:rigid} requires $A$ to strictly
henselian. However, as we have remarked before, the strictness was used
only to ensure that $G$ is split.
Now using the equivariant localization sequence, we only need to show that
the map $K^G_i \left(A, \Lambda \right) \to K^G_i \left(F, \Lambda \right)$ 
is injective. We can replace $K$-groups by $G$-groups using
Proposition~\ref{prop:resolution}.

If $G$ is a split torus, we have shown the isomorphism
$G_i \left(A, \Lambda \right) {\otimes}_{\Lambda} R_A \left(G, \Lambda \right)
\cong G^G_i \left(A, \Lambda \right)$. Now the claim follows from
Corollary~\ref{cor:Gersten1} and the 
fact that $G_i\left(A, \Lambda \right) \to G_i\left(F, \Lambda \right)$
is split injective as shown by Suslin 
({\sl cf.} \cite[Corollary~3.11]{Suslin0}).
If $G$ is split reductive with a split maximal torus $T$, we have seen above 
that the map $G^G_i\left(A, \Lambda \right) \to 
G^T_i\left(A, \Lambda \right)$ is split injective. The lemma now follows
from the torus case.
\end{proof}
{\bf{Proof of Theorem~\ref{thm:integer}:}}
If ${E'}/F$ is a finite subextension of $E/F$, then $E'$ is a complete 
discretely valued field with residue field $L$. Since $G$ is split over
$\Z$, it is so over the valuation ring of $E'$. Now we apply
Lemma~\ref{lem:integer*} to get a short exact sequence
\[
0 \to K^G_i \left(L, \Lambda \right) \to K^G_i \left(E', \Lambda \right)
\xrightarrow{d} K^G_{i-1} \left(L, \Lambda \right) \to 0.
\]
Moreover, the exact sequence ~\ref{eqn:finite} implies that
$K^G_{i-1} \left(L, \Lambda \right)$ is of a bounded exponent.
Now the proof of Suslin ({\sl cf.} \cite[Proposition~3.12]{Suslin0})
goes through verbatim in the equivariant setting which completes the
proof of the theorem.
$\hfill \square$
\\
\\
{\bf{Proof of Corollary~\ref{cor:integer0}:}}
If $k$ is an algebraically closed field of positive characteristic and 
if $G$ is a connected reductive group over $k$, then $G$ is split
and hence is given by a root system. Chevalley's theorem then implies that 
such a group $G$ can be lifted to a split group scheme over $\Z$.
Now we can use Theorem~\ref{thm:integer} to reduce to the case when
$k$ is of characteristic zero. In that case, we can use
\cite[Theorem~2]{OY} to assume that $k$ is the field of complex numbers.
If $G$ is now a torus, then the corollary follows from the isomorphism
$K_i \left(k, \Lambda \right) {\otimes}_{\Lambda} R_k \left(G, \Lambda \right)
\cong K^G_i \left(k, \Lambda \right)$ and \cite[Corollary~3.13]{Suslin0}.
If $G$ is any connected reductive group, the result follows again
from the corresponding non-equivariant version of Suslin and the
fact that $K^G_*\left(k, {\Z}/n \right)$ is a retract of
$K^T_*\left(k, {\Z}/n \right)$ as shown above.
$\hfill \square$
\\
\\    

School of Mathematics,
Tata Institute Of Fundamental Research, \\
Homi Bhabha Road,
Mumbai, 400005, India. \\ 
{\sl E-mail address :} amal@math.tifr.res.in

\end{document}